\newcommand{\Diff}{\mathcal{D}}
\newcommand{\Emb}{\mathcal{E}}
\newcommand{\id}{\text{id}}
\newcommand{\Laplacian}{\Delta}
\DeclareMathOperator{\grad}{grad}
\DeclareMathOperator{\ad}{ad} 
\DeclareMathOperator{\sgrad}{sgrad}
\DeclareMathOperator{\diver}{div} \DeclareMathOperator{\curl}{curl}
\DeclareMathOperator{\Exp}{Exp}
\newcommand{\llangle}{\langle\!\langle}
\newcommand{\rrangle}{\rangle\!\rangle}
\newcommand{\SemiNM}{C^{\infty}(N,M\times \mathbb{R})}
\DeclareMathOperator{\Jac}{Jac}
\newtheorem{theorem}{Theorem}[section]
\newtheorem{lemma}[theorem]{Lemma}
\newtheorem{corollary}[theorem]{Corollary}
\theoremstyle{definition}
\newtheorem{exmp}[theorem]{Example}
\newtheorem{remark}[theorem]{Remark}
\theoremstyle{remark}
\begin{document}

\title{The geometry of barotropic flow}
\author{Stephen C. Preston}
\address{Department of Mathematics, University of Colorado, Boulder,
CO 80309-0395} \email{Stephen.Preston@colorado.edu}

\maketitle

\tableofcontents

\section{Introduction}

In this article we write the equations of barotropic compressible fluid mechanics as a geodesic equation on an infinite-dimensional manifold. 
The equations are given by
\begin{align}
u_t + \nabla_uu = -\frac{1}{\rho} \grad p, \label{evolution} \\
\rho_t + \diver{(\rho u)} = 0, \label{continuity}
\end{align}
where the fluid fills up a compact manifold $M$, $u$ is a time-dependent velocity field on $M$, and $\rho$ is the density, a positive function on $M$. The barotropic assumption is that the pressure $p$ is some given function of the density, although our methods also extend to certain more general isentropic flows.
Our infinite-dimensional manifold is the product $\Diff(M)\times C^{\infty}(M,\mathbb{R})$. This is a group using the semidirect product (which is sometimes incorporated in other treatments), but the Riemannian metric we use is neither left- nor right-invariant. Hence our geodesic equation is \emph{not} an Euler-Arnold equation. We compute the sectional curvature and show that at least when $M=S^1$, the curvature is always nonnegative. We also establish some results on the Lagrangian linear stability of solutions of this system, for certain nonsteady solutions in one dimension  and steady solutions in two dimensions.

It has been known for many years that the system \eqref{evolution}--\eqref{continuity} can be derived via Hamilton's principle on an infinite-dimensional manifold (see e.g., Ebin~\cite{ebin}, Smolentsev~\cite{smolentsev}, and Holm et al.~\cite{HMR}, but all such approaches use a nonzero potential energy, so that the equations are essentially Newton's equation $\frac{D}{dt} \frac{d\eta}{dt} = -\grad \Psi$ for some nonzero $\Psi$ rather than a pure geodesic motion (with $\Psi=0$). We review this approach in Section \ref{backgroundsection}. In contrast we obtain a genuine geodesic equation, but the price we pay is that our geodesic equation only gives the barotropic equations on a certain nonholonomic distribution (i.e., some geodesics correspond to barotropic fluids, while others have no physical meaning). Our method is somewhat closer to the metric introduced in Eisenhart~\cite{eisenhart} on the product $\Diff(M)\times \mathbb{R}^2$, where extra degrees of freedom are introduced and weighted in the Riemannian metric by terms involving the potential energy and an arbitrary parameter. The difference is that our metric is ``diagonal'' on the product, and the extra variables we introduce are directly related to the density.

One motivation for doing this is to obtain some intuition for the behavior of compressible fluids under a perturbation. In much the same way that a one-dimensional particle trajectory in a convex potential energy behaves approximately like a geodesic in a surface of positive curvature, we hope to understand stability of a compressible fluid using curvature computations which at least intuitively suggest the behavior of perturbed solutions. Another motivation is to understand the warped product geometry of $\Diff(M) \times C^{\infty}(M,\mathbb{R})$, under a noninvariant metric. Invariant metrics have some nice algebraic properties, but are often not physically relevant (for example, the right-invariant $L^2$ metric on $\Diff(M)$ does not come from any kind of physics, and its geodesic equation has no known physical relevance; while the non-invariant $L^2$ metric we consider is naturally related to the kinetic energy, and its geodesic equation describes the motion of a force-free family of particles).

In Section \ref{semidirectgeometry}, we write down the Riemannian metric and its geodesics along with its Riemannian curvature. The advantage of the geodesic approach, as pioneered by Arnold~\cite{arnold} for incompressible fluids, is that one can in principle use the curvature to discuss Lagrangian stability of the fluid: intuitively, if the sectional curvature is positive, then the particle paths should be stable under small perturbations of the initial velocity. (More precisely one uses the Rauch theorem which gives results up to the first conjugate point.) This cannot be applied rigorously in our case, since unlike in the case of incompressible fluids~\cite{ebinmarsden}, the Riemannian exponential map cannot be smooth in any Hilbert topology: simple examples show that geodesics  in the present context are not even locally minimizing. Hence our approach is necessarily only formal, and so we may as well require all objects to be $C^{\infty}$ rather than working in Sobolev spaces.

Nonetheless we can still analyze the Jacobi equation for linear perturbations of geodesics directly, which we do in Section \ref{jacobisection}; see \cite{BG} for a similar perspective. We establish stability or weak instability in special cases (for arbitrary solutions in one space dimension and a rigid rotational flow in two space dimensions). This is not a significant drawback, since even for incompressible fluids a direct approach to stability is frequently more informative than an analysis of the curvature (see \cite{prestonstability}). In the one-dimensional case we show that Lagrangian perturbations grow at most linearly in time up to the shock (when solutions cease to be smooth and our methods no longer apply), while in the two-dimensional steady case, many Lagrangian perturbations are bounded for all time. This portion of the paper previously appeared in the author's thesis~\cite{thesis}.

\section{Background}\label{backgroundsection}

We describe a compressible fluid as follows: consider a manifold $N$ with a volume form $\nu$ (describing the mass distribution) and a Riemannian manifold $M$ with metric $\langle \cdot, \cdot\rangle$ and Riemannian volume form $\mu$ (describing physical space). Fluid configurations are described by trajectories $\eta(t)\in C^{\infty}(N,M)$, where the density is defined by 
\begin{equation}\label{densitydef}
\rho\circ\eta \eta^*\mu =  \nu, \quad \text{or } \Jac(\eta) \rho\circ\eta.
\end{equation} 
This space is formally\footnote{To prove theorems about geometry on infinite-dimensional manifolds rigorously, one prefers a Banach or Hilbert structure; however those theorems will not apply in this case regardless, so we lose nothing by staying in the $C^{\infty}$ category.} a manifold, where the tangent spaces are 
$$T_{\eta}C^{\infty}(N,M) = \{U\colon N\to TM \, \vert \, U(p)\in T_{\eta(p)}M \,\forall p\in N\}$$
and the coordinate charts are given by the exponential maps 
$$ \Exp_{\eta}(U) = p\mapsto \exp_{\eta(p)}(U(p)),$$
where $\exp$ is the Riemannian exponential map on $M$ (which takes the velocity vector $U(p)$ to the point on the geodesic through $\eta(p)$ in direction $U(p)$ at time one). In the Riemannian metric 
\begin{equation}\label{noninvariantmetric}
\llangle U,V\rrangle_{\eta} = \int_N \langle U(p), V(p)\rangle_{\eta(p)} \, d\nu,
\end{equation}
$\Exp$ is precisely the Riemannian exponential map. On the space $C^{\infty}(N,M)$, the map $\Exp$ is globally defined, although an obvious problem is that physically maps from $N$ to $M$ should be one-to-one: the physically natural space is the space of smooth embeddings $\Emb(N,M)$, but the exponential map is no longer globally defined on this space since it's very easy for geodesics to intersect. See \cite{ebinmarsden} for details. We note that the metric \eqref{noninvariantmetric} is invariant under the right-action of $\Diff_{\nu}(N)$ and under the left-action of the isometry group of $M$. Hence even if $M=N$, the metric is neither right- nor left-invariant on the open subset $\Diff(M)$.

If $\eta\in C^{\infty}(N,M)$ is a diffeomorphism, then we can express any element $U\in T_{\eta}C^{\infty}(N,M)$ as $U = u\circ\eta$, where $u$ is a vector field on $M$. By the change of variables formula for integrals, the Riemannian metric \eqref{noninvariantmetric} becomes the more familiar expression
$$ \llangle u\circ\eta, v\circ\eta\rrangle_{\eta} = \int_M \rho(q) \langle u(q), v(q)\rangle \, d\mu.$$
The geodesic equation $\frac{D}{dt} \frac{d\eta}{dt} = 0$ on $C^{\infty}(N,M)$ becomes, in terms of $u$ defined by 
\begin{equation}\label{flowequation}
\eta_t(t,p) = u\big(t, \eta(t,p)\big),
\end{equation} the Burgers' equation (or pressureless Euler equation) 
$u_t + \nabla_uu = 0$, while differentiating the density formula \eqref{densitydef} leads to the continuity equation \eqref{continuity}.

To obtain the barotropic equation \eqref{evolution}, we define (see Ebin~\cite{ebin} or Smolentsev~\cite{smolentsev}) a potential energy function $\Psi\colon C^{\infty}(N, M)\to \mathbb{R}$ which depends only on the density, of the form 
\begin{equation}\label{potential} 
\Psi(\eta) = \int_M \rho \psi(\rho) \, d\mu = \int_N \psi\left(\frac{1}{\Jac(\eta)}\right) \, d\nu.
\end{equation}
We can then compute that the gradient of this function in the metric \eqref{noninvariantmetric} is 
$$ (\grad \Phi)_{\eta} = \left( \frac{1}{\rho} \grad p(\rho)\right) \circ \eta, \qquad \text{where}\quad p(\rho) = \rho^2 \psi'(\rho),$$
and hence Newton's equation $\frac{D}{dt}\frac{d\eta}{dt} = -(\grad \Phi)_{\eta}$ can be written in the form \eqref{evolution} together with \eqref{flowequation}.
As pointed out by Smolentsev~\cite{smolentsev}, this system (like any conservative Newtonian system) can be rewritten as a geodesic equation with a modified metric---the Jacobi or Maupertuis metric. Generally speaking, if $\gamma$ satisfies Newton's equation $\frac{D}{dt}\frac{d\gamma}{dt} = -(\grad \Phi)_{\gamma(t)}$ on some manifold, then the energy $E=\frac{1}{2} \lvert \dot{\gamma}(t)\rvert^2 + \Phi(\gamma(t))$ is constant in time, and all solutions with the same energy will be geodesics in the conformally equivalent Riemannian metric 
$$ (u, v)_{\gamma} = \big(E-\Phi(\gamma)\big) \langle u, v\rangle_{\gamma}.$$ In this paper we will take a different approach which leads to a different formula for the Riemannian curvature.

In recent years many authors have studied infinite-dimensional geodesic equations which arise on groups of diffeomorphisms or related groups with right-invariant metrics. The geodesic equation in this case takes the form $\dot{\eta}(t) = dR_{\eta(t)} u(t)$ and $\dot{u}(t) + \ad^{\star}_{u(t)}u(t) = 0$, and the second equation is the Euler-Arnold equation. See \cite{arnoldkhesin} for examples. The closest relevant example in the present situation is the semidirect product $\Diff(S^1)\ltimes C^{\infty}(S^1)$. The Euler-Arnold equation was computed in \cite{guha} to be 
$$ u_t + 3uu_x + ff_x = 0, \qquad f_t + fu_x + uf_x = 0.$$
Although this resembles the system \eqref{evolution}--\eqref{continuity}, the extra factor of $3$ makes it genuinely different, and this factor cannot be removed by a rescaling without changing the flow equation $\eta_t = u\circ\eta$. In addition, the higher-dimensional version as in \cite{vizman} does not even superficially resemble the equation \eqref{evolution}. More general Euler-Arnold equations on semidirect product groups were considered in \cite{HMR}, who also considered an alternative Lagrangian approach to the barotropic equations (where $\rho$ is considered as an independent variable rather than derived from flow $\eta$). Our approach uses a non-invariant metric which does not lead to an Euler-Arnold equation.

\section{The geometry of $C^{\infty}(N, M\times \mathbb{R})$}\label{semidirectgeometry}

Now we define our (noninvariant) Riemannian metric. Our configuration space will be $C^{\infty}(N, M)\times C^{\infty}(N, \mathbb{R}) = C^{\infty}(N, M\times \mathbb{R})$. Tangent vectors to a point $(\eta, F)$ are of the form $(U,\phi)$ where $U\in T_{\eta}C^{\infty}(N,M)$ and $\phi \in C^{\infty}(N,\mathbb{R})$. If $\eta$ is a diffeomorphism, we can express $(U,\phi) = (u\circ\eta, f\circ\eta)$, where $u$ is a vector field on $M$ and $f$ is a function on $M$. 
Let $\lambda\colon \mathbb{R}^+\to \mathbb{R}^+$ be some smooth function, and define a metric on the product $C^{\infty}(N, M \times \mathbb{R})$ by the formula 
\begin{equation}\label{Semimetric}
\llangle (u\circ\eta, f\circ\eta), (v\circ\eta, g\circ\eta)\rrangle_{(\eta, F)} = 
\int_M \big[ \lambda(\rho) fg + \rho\langle u, v\rangle \big] \, d\mu.
\end{equation}
This is essentially a warped product of $C^{\infty}(N, M)$ in the noninvariant metric \eqref{noninvariantmetric} with the space of functions on $N$.

\subsection{Basic formulas}

The following Lemma shows us how to differentiate functions on our configuration space. The proof is a computation which can be found in \cite{smolentsev} or \cite{ebin}.

\begin{lemma}\label{functiondiff}
Suppose $W$ is a vector field on $C^{\infty}(N, M\times \mathbb{R})$ given by $W_{(\eta, F)} = (w\circ\eta, h\circ\eta)$ for some vector field $w$ on $M$ and function $h\colon M\to \mathbb{R}$. Suppose $\Phi\colon C^{\infty}(N, M\times \mathbb{R})\to \mathbb{R}$ is a function of the form 
$$ \Phi(\eta, F) = \int_M \alpha \varphi(\rho) \, d\mu = \int_N \alpha\circ\eta \, \Jac(\eta) \, \varphi(1/\Jac(\eta)) \, d\nu$$
for some functions $\alpha\colon M\to \mathbb{R}$ and $\varphi\colon \mathbb{R}^+\to\mathbb{R}$, where $\rho$ is defined by \eqref{densitydef}. Then the derivative of $\Phi$ in the direction $W$ is given at any point $(\eta, F)$ by 
\begin{equation}\label{densityfunctionderivative}
 W_{(\eta, F)}\Phi = \int_M \rho \langle w, \grad\big(\alpha \varphi'(\rho)\big)\rangle \, d\mu = -\int_M \diver{(\rho w)} \alpha \varphi'(\rho) \, d\mu.
 \end{equation}
\end{lemma}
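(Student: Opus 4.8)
The plan is to pass to the Lagrangian (second) expression for $\Phi$, in which the integration is over the fixed space $N$ and all of the $\eta$-dependence has been moved inside the integrand, and then to differentiate under the integral sign along a one-parameter variation. Note first that $\Phi$ does not depend on the $C^{\infty}(N,\mathbb{R})$-factor $F$ at all, so the $h\circ\eta$ component of $W$ plays no role and only $w$ matters.

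Concretely, I would take a curve $t\mapsto\eta_t$ in $C^{\infty}(N,M)$ with $\eta_0=\eta$ and $\partial_t\eta_t=w\circ\eta_t$, so that its initial velocity $w\circ\eta$ represents the given tangent vector at $(\eta,F)$, and write $W_{(\eta,F)}\Phi=\frac{d}{dt}\big|_{t=0}\int_N (\alpha\circ\eta_t)\,\Jac(\eta_t)\,\varphi\!\left(1/\Jac(\eta_t)\right)\,d\nu$. The two facts I need are the chain rule, $\frac{d}{dt}(\alpha\circ\eta_t)=\langle\grad\alpha,w\rangle\circ\eta_t$, and the transport formula for the Jacobian, $\frac{d}{dt}\Jac(\eta_t)=\big((\diver w)\circ\eta_t\big)\Jac(\eta_t)$, which follows from $\frac{d}{dt}\eta_t^*\mu=\eta_t^*\bigl(\Lie_w\mu\bigr)=\eta_t^*\bigl((\diver w)\mu\bigr)$ together with $\eta_t^*\mu=\Jac(\eta_t)\,\nu$. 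Differentiating the integrand by the product and chain rules and substituting these, then changing variables back to $M$ via $\int_N (g\circ\eta)\,d\nu=\int_M g\rho\,d\mu$ and $\Jac(\eta)=(1/\rho)\circ\eta$, collapses the three resulting terms to
$$ W_{(\eta,F)}\Phi = \int_M \Big[\langle\grad\alpha,w\rangle\,\varphi(\rho) + \alpha\,\varphi(\rho)\diver w - \rho\,\alpha\,\varphi'(\rho)\diver w\Big]\,d\mu. $$

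To finish, I would recognize the first two terms as $\diver\bigl(\alpha\varphi(\rho)w\bigr)-\alpha\varphi'(\rho)\langle\grad\rho,w\rangle$ and discard the exact divergence by the divergence theorem (here the standing hypothesis that $M$ is compact—and without boundary, or with $w$ tangent to $\partial M$—is exactly what makes the boundary term vanish). This leaves $-\int_M \alpha\varphi'(\rho)\bigl(\langle\grad\rho,w\rangle+\rho\diver w\bigr)\,d\mu=-\int_M \alpha\varphi'(\rho)\diver(\rho w)\,d\mu$, which is the second equality in \eqref{densityfunctionderivative}; a single further integration by parts, moving the divergence off $\rho w$ and onto $\alpha\varphi'(\rho)$, yields the first equality $\int_M \rho\langle w,\grad(\alpha\varphi'(\rho))\rangle\,d\mu$.

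The content here is entirely standard—it is essentially the first variation of the mass/energy functionals familiar from \cite{ebin,smolentsev}—and the main (minor) obstacle is bookkeeping rather than anything conceptual: one must keep straight the direction of the change of variables, namely whether each factor naturally lives on $N$ or on $M$, and the interplay between $\rho$ and $\Jac(\eta)=1/(\rho\circ\eta)$ when the integral is pushed back to $M$.
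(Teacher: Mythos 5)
Your proposal is correct: the paper itself omits the proof of Lemma \ref{functiondiff}, citing \cite{smolentsev} and \cite{ebin}, and your argument is exactly the standard first-variation computation those references carry out --- differentiate the pulled-back integral over $N$ using $\tfrac{d}{dt}\Jac(\eta_t)=(\diver w\circ\eta_t)\Jac(\eta_t)$, change variables back to $M$, and integrate by parts. The bookkeeping (including the identity $\rho\circ\eta=1/\Jac(\eta)$ and both integrations by parts yielding the two forms of \eqref{densityfunctionderivative}) checks out, and your remark about the boundary term is an appropriate caveat.
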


Now that we know how to differentiate functions that depend only on the density, we can use the Koszul formula to obtain the covariant derivative. For our purposes it is sufficient to compute in terms of vector fields of the form $W_{(\eta, F)} = (w\circ\eta, h\circ\eta)$ for some vector field $w$ on $M$ and function $h\colon M\to \mathbb{R}$, as in Lemma \ref{functiondiff}. 

\begin{lemma}\label{covariantderivative}
Let $u$ and $v$ be vector fields on $M$, and let $f$ and $g$ be real-valued functions on $M$. Define vector fields $U$ and $V$ on $C^{\infty}(N, M\times \mathbb{R})$ by $U_{(\eta, F)} = (u\circ\eta, f\circ\eta)$ and $V_{(\eta, F)} = (v\circ\eta, g\circ\eta)$. Then the covariant derivative $\nabla_UV$ is given by 
$ (\nabla_UV)_{(\eta, F)} = Z\circ\eta$, where 
$$ Z = \big( u(g), \nabla_uv\big) + \Gamma_{\rho}\big( (u,f), (v,g)\big) $$
and $$\Gamma_{\rho}\colon \big(\chi(M)\times C^{\infty}(M,\mathbb{R})\big)^2 \to \chi(M)\times C^{\infty}(M, \mathbb{R})$$ is the Christoffel map, a bilinear map satisfying
\begin{equation}\label{christoffelproduct}
\llangle \Gamma_{\rho}\big((u,f), (v,g)\big), (w,h)\rrangle = \int_M \varphi(\rho) \big( hf \diver{v} + hg \diver{u} - fg \diver{w}\big) \, d\mu,
\end{equation}
where $\rho$ is given by \eqref{densitydef} and
\begin{equation}\label{phidef}
\varphi(\rho) = \frac{1}{2} \big( \lambda(\rho) - \rho \lambda'(\rho)\big).
\end{equation}
\end{lemma}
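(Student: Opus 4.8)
The plan is to derive the covariant derivative from the Koszul formula, exploiting that the metric \eqref{Semimetric} restricted to vector fields of the form $(w\circ\eta, h\circ\eta)$ is essentially a ``warped product'' built from the noninvariant metric \eqref{noninvariantmetric}. First I would recall that for such metrics the Koszul formula
$$
2\llangle \nabla_UV, W\rrangle = U\llangle V,W\rrangle + V\llangle U,W\rrangle - W\llangle U,V\rrangle + \llangle [U,V],W\rrangle - \llangle [U,W],V\rrangle - \llangle [V,W],U\rrangle
$$
determines $\nabla_UV$ uniquely once we can evaluate each of the six terms. The key structural observation is that the configuration space splits as $C^\infty(N,M)\times C^\infty(N,\mathbb{R})$, and the $M$-factor metric is exactly \eqref{noninvariantmetric}, whose Levi-Civita connection is already known (from \cite{ebin},\cite{smolentsev}) to be the pullback of $\nabla$ on $M$; this is what produces the $\nabla_uv$ term. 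Similarly the pure $\mathbb{R}$-direction derivative of $g\circ\eta$ along $u\circ\eta$ contributes $u(g)\circ\eta$ since $\partial_t(g\circ\eta(t)) = dg(u)\circ\eta = u(g)\circ\eta$ along the flow \eqref{flowequation}. So the only genuinely new piece is the correction coming from the density-dependent weight $\lambda(\rho)$ on the $f g$ term together with the weight $\rho$ on $\langle u,v\rangle$: this is precisely the bilinear map $\Gamma_\rho$.

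To pin down $\Gamma_\rho$, I would test \eqref{christoffelproduct} against an arbitrary $(w,h)$ and compute the three derivative terms of the Koszul formula in which the weights $\lambda(\rho)$ or $\rho$ are differentiated. Here Lemma \ref{functiondiff} is the essential tool: the function $(\eta,F)\mapsto \int_M \lambda(\rho) fg\,d\mu$ is exactly of the form treated there (with $\alpha=fg$ if $f,g$ are regarded as fixed functions on $M$ and $\varphi$ a suitable antiderivative-type function built from $\lambda$), so its directional derivative along $(w\circ\eta,h\circ\eta)$ is computable by \eqref{densityfunctionderivative}. A similar application handles $\int_M \rho\langle u,v\rangle\,d\mu$, where the $\rho$-weight differentiates to give $-\diver(\rho w)\langle u,v\rangle$ under the integral after using the second form of \eqref{densityfunctionderivative}. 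One must be careful that $f,g,u,v$ are not literally functions on $M$ but are pulled back via $\eta$, so that along a variation the ``$\alpha$'' in Lemma \ref{functiondiff} is itself moving; the clean way around this is to choose the variation field $W$ and compute $W_{(\eta,F)}\Phi$ only at the basepoint, where $\alpha\circ\eta$, $fg$, etc.\ take their given values, exactly as the Lemma is stated. The Lie bracket terms $[U,V]$, $[U,W]$, $[V,W]$ contribute the $u(g)$-type and $\nabla_uv$-type pieces plus terms that combine with the derivative terms; collecting everything and using the divergence theorem on $M$ (which has no boundary, or on which we assume the natural boundary conditions) to move derivatives onto $w$ and $h$ should yield exactly the symmetric combination $hf\diver v + hg\diver u - fg\diver w$, weighted by the function $\varphi(\rho) = \tfrac12(\lambda(\rho) - \rho\lambda'(\rho))$ in \eqref{phidef}.

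The main obstacle I anticipate is bookkeeping: correctly identifying which antiderivative of $\lambda$ plays the role of $\varphi$ in Lemma \ref{functiondiff}, and tracking the chain-rule factors coming from $\rho = 1/\Jac(\eta)$ so that the coefficient works out to $\tfrac12(\lambda - \rho\lambda')$ rather than some other combination. The factor of $\tfrac12$ is forced by the Koszul formula, but the sign pattern and the precise appearance of $\rho\lambda'$ require care: differentiating $\lambda(\rho)$ along a flow gives $\lambda'(\rho)\dot\rho$, and since $\rho_t = -\diver(\rho w)$ along the variation, one gets a $\lambda'(\rho)\diver(\rho w)$ term that must be integrated by parts and merged with the undifferentiated $\lambda(\rho)$ term; the combination $\lambda(\rho) - \rho\lambda'(\rho)$ emerges from this merge. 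Once the three metric-derivative terms and the three bracket terms are written out against a generic $(w,h)$, matching coefficients against \eqref{christoffelproduct} is then routine linear algebra, and the uniqueness half of the fundamental theorem of Riemannian geometry (valid formally in this setting) identifies the result as $\nabla_UV$.
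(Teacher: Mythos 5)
Your proposal is correct and follows essentially the same route as the paper's (very terse) proof: the Koszul formula, Lemma \ref{functiondiff} applied to the density-weighted terms of the metric \eqref{Semimetric}, the commutator formula $[U,V]_{(\eta,F)}=\big([u,v]\circ\eta,(u(g)-v(f))\circ\eta\big)$ for right-invariant fields, and integration by parts to collect the coefficient $\varphi(\rho)=\tfrac12(\lambda(\rho)-\rho\lambda'(\rho))$. One small simplification: no ``antiderivative-type'' function is needed, since the term $\int_M \lambda(\rho)fg\,d\mu$ is already of the form in Lemma \ref{functiondiff} with $\alpha=fg$ and $\varphi=\lambda$ (and $\int_M\rho\langle v,w\rangle\,d\mu$ with $\alpha=\langle v,w\rangle$, $\varphi(\rho)=\rho$), $f,g,u,v$ being genuinely fixed objects on $M$ because the fields are right-invariant.
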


\begin{proof}
This is straightforward using the Koszul formula (see e.g., \cite{docarmo}), Lemma \ref{functiondiff}, and the fact that the commutator of right-invariant fields is $[U,V]_{(\eta, F)} = \big( [u,v]\circ\eta, (u(g)-v(f))\circ\eta\big).$
\end{proof}

\begin{remark} Note that in general a weak Riemannian metric does not necessarily have any covariant derivative at all; what happens here is essentially the same sort of fortunate accident that occurs on the full diffeomorphism group $\Diff(M)$ as in Ebin-Marsden~\cite{ebinmarsden}. Here we are effectively working on $C^{\infty}(N, M\times \mathbb{R})$, and using the fact that the covariant derivative constructed by \cite{ebinmarsden} on $\Diff(M)$ works in the same way on $C^{\infty}(N,X)$ for any Riemannian manifold $X$.  Of course the usual covariant derivative on $C^{\infty}(N,M\times \mathbb{R})$ is somewhat different since that comes from a Cartesian product metric and our metric is a warped product; however the difference of two connections is an operator defined pointwise (the Christoffel symbol), and so we get our connection as long as we understand this difference. Lemma \ref{covariantderivative} basically just computes this difference. 
\end{remark}

Although the formula \eqref{christoffelproduct} will be most convenient for our purposes, it is easy to see via integration by parts that we can write $\Gamma_{\rho}$ more explicitly as 
\begin{multline}\label{gammaexplicit}
\Gamma_{\rho}\big( (u\circ\eta,f\circ\eta), (v\circ\eta,g\circ\eta) \big) = (z\circ\eta, j\circ\eta) \qquad \text{where}  \\
j = \frac{\varphi(\rho)}{\lambda(\rho)} (f\diver{v} + g\diver{u})\quad\text{and}\quad z = \frac{1}{\rho} \, \grad\big( \varphi(\rho) fg \big).
\end{multline}
Notice that if $\lambda(\rho)=\rho$, then $\varphi(\rho)=0$ and hence $\Gamma_{\rho}=0$ as well. In this case the geometry reduces to the geometry of $M$ (as we will see later), and the covariant derivative is 
\begin{multline}\label{flatcovderiv}
(\widetilde{\nabla}_UV)_{(\eta, F)} = DR_{(\eta, F)} \big( u(g), \nabla_uv\big) = \big( u(g)\circ\eta, \nabla_uv\circ\eta\big) \\
\text{when $U_{(\eta, F)} = (u\circ\eta, f\circ\eta)$ and $V_{(\eta, F)} = (v\circ\eta, g\circ\eta)$}.
\end{multline}
This will be useful later.
Having obtained the covariant derivative, we can start computing. Our first goal is to obtain the geodesic equation.

\begin{corollary}\label{geodesiccorollary}
Suppose $\big(\eta(t), F(t)\big)$ is a geodesic curve in $C^{\infty}(N, M\times\mathbb{R})$ in the Riemannian metric \eqref{Semimetric}, with $\eta(t)$ a diffeomorphism from $N$ to $M$ on some interval $(-T, T)$. Define $\rho(t)$ by \eqref{densitydef}. Let $u(t)$ be the vector field satisfying $\frac{\partial \eta}{\partial t} = u\circ\eta$, and let $f(t)$ be the function satisfying $\frac{\partial F}{\partial t} = f\circ\eta$. Set $q = \lambda(\rho) f$. Then 
$u$ and $q$ satisfy the equations
\begin{align}
u_t + \nabla_uu + \frac{1}{\rho} \, \grad \Big( \frac{q^2 \varphi(\rho)}{\lambda(\rho)^2}\Big) &= 0 \label{evolutionuq} \\
q_t + \diver{(qu)} &= 0, \label{qcontinuity}
\end{align}
where $\varphi$ is given by \eqref{phidef}.
\end{corollary}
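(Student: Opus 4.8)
The plan is to write out the geodesic equation $\frac{D}{dt}\dot\gamma = 0$ for the curve $\gamma(t) = (\eta(t), F(t))$ and then rewrite it in the Eulerian variables $(u,f)$ using Lemma \ref{covariantderivative}. The key point in the first step is the standard observation (the same one that turns $\frac{D}{dt}\frac{d\eta}{dt} = 0$ into Burgers' equation $u_t+\nabla_uu=0$) that, for a velocity written in ``body'' form as $\dot\gamma(t) = \big(u(t)\circ\eta(t),\, f(t)\circ\eta(t)\big)$, the covariant derivative along the curve equals the covariant derivative of the frozen right-translated field — given by the Koszul/Christoffel expression of Lemma \ref{covariantderivative} — plus the genuine time derivative of $(u,f)$ composed with $\eta$. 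In the $M$-slot the frozen part contributes $\nabla_uu$ (compare \eqref{flatcovderiv}) and the time derivative contributes $u_t$; in the $\mathbb{R}$-slot the frozen part contributes $u(f)$ and the time derivative contributes $f_t$; the remaining contribution is $\Gamma_\rho\big((u,f),(u,f)\big)$. Evaluating the explicit formula \eqref{gammaexplicit} at $(v,g)=(u,f)$ gives this correction as $\big(\tfrac1\rho\grad(\varphi(\rho)f^2),\ \tfrac{2\varphi(\rho)f}{\lambda(\rho)}\diver u\big)$, so that $\frac{D}{dt}\dot\gamma = 0$ becomes the system
\begin{align*}
u_t + \nabla_uu + \frac{1}{\rho}\,\grad\big(\varphi(\rho)f^2\big) &= 0,\\
f_t + u(f) + \frac{2\varphi(\rho)f}{\lambda(\rho)}\,\diver u &= 0 .
\end{align*}

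Next I would substitute $q = \lambda(\rho)f$. Since $\varphi(\rho)f^2 = q^2\varphi(\rho)/\lambda(\rho)^2$, the first equation is exactly \eqref{evolutionuq}. For the second, I would use the continuity equation $\rho_t + \diver(\rho u) = 0$, which holds automatically here because $\rho$ is defined by \eqref{densitydef} and $\eta_t = u\circ\eta$ (as recalled in Section \ref{backgroundsection}). Expanding $q_t + \diver(qu) = q_t + u(q) + q\diver u$ with $q = \lambda(\rho)f$ gives $\lambda'(\rho)f\big(\rho_t + u(\rho)\big) + \lambda(\rho)\big(f_t + u(f)\big) + \lambda(\rho)f\diver u$; replacing $\rho_t + u(\rho)$ by $-\rho\diver u$ and using the definition \eqref{phidef} of $\varphi$ collapses the two $f\diver u$ terms into $2\varphi(\rho)f\diver u$, leaving $\lambda(\rho)\big(f_t + u(f) + \tfrac{2\varphi(\rho)f}{\lambda(\rho)}\diver u\big)$, which vanishes by the second equation above. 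This is \eqref{qcontinuity}.

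I expect the only delicate point to be the bookkeeping in the first step: making sure that the transport already encoded in $\eta_t = u\circ\eta$ is not counted twice, i.e., that $\frac{D}{dt}\dot\gamma$ in body form is precisely ``material derivative of $(u,f)$'' plus $\Gamma_\rho(\dot\gamma,\dot\gamma)$, with the weights $\lambda(\rho)$ and $\rho$ in \eqref{Semimetric} entering only through $\Gamma_\rho$. This is justified exactly as in the pressureless case treated in Section \ref{backgroundsection} together with the remark following Lemma \ref{covariantderivative}; everything after it is the elementary manipulation above, using only $\diver(qu) = q\diver u + u(q)$ and the two defining relations \eqref{phidef} and \eqref{densitydef}.
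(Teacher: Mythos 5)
Your proposal is correct and follows essentially the same route as the paper: write the geodesic equation in body form as $\dot U\circ\eta+(\nabla_U U)\circ\eta=0$, evaluate $\Gamma_\rho\big((u,f),(u,f)\big)$ via \eqref{gammaexplicit} to get the intermediate system for $(u,f)$, and then pass to $q=\lambda(\rho)f$ using the continuity equation \eqref{continuity} and the definition \eqref{phidef} of $\varphi$. The only difference is that you spell out the substitution computation that the paper leaves as ``easy to see,'' which is fine.
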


\begin{proof}
If we define $u$ and $f$ by $\dot{\eta}(t) = u(t)\circ\eta(t)$ and $\dot{F}(t) = f(t)\circ\eta(t)$ and $U(t) = \big(u(t), f(t)\big)$, then the geodesic equation can be written as $\dot{U}(t)\circ\eta(t) + (\nabla_{U(t)}U(t))\circ\eta(t) = 0$. 
Using formula \eqref{gammaexplicit}, this becomes
\begin{align*}
\frac{\partial u}{\partial t} + \nabla_uu + \frac{1}{\rho} \, \grad\big( \varphi(\rho) f^2\big) &= 0 \\
f_t + u(f) + \frac{2\varphi(\rho)}{\lambda(\rho)} \, f \diver{u} &= 0.
\end{align*}
Using \eqref{continuity}, it is then easy to see that $q=\lambda(\rho)f$ satisfies \eqref{qcontinuity}.
\end{proof}

Obviously \eqref{qcontinuity} is exactly the same differential equation as \eqref{continuity}, which implies that if $q=\rho$ at time zero, then $q=\rho$ for all time. In this case, the equation for $u$ satisfies 
$$ u_t + \nabla_uu + \frac{1}{\rho} \, \grad\Big( \frac{\rho^2 \varphi(\rho)}{\lambda(\rho)^2}\Big) = 0.$$ 
This is precisely equation \eqref{evolution} if we define 
\begin{equation}\label{pressurelambda}
p(\rho) = \frac{\rho^2 \varphi(\rho)}{\lambda(\rho)^2}.
\end{equation}
We have thus obtained the barotropic equations as a special case of the geodesic equation \eqref{evolutionuq}--\eqref{qcontinuity}. We summarize this as another corollary.

\begin{corollary}\label{compressiblegeodesic}
Suppose $\gamma_0 = (\eta_0, F_0)$ is a point in $\SemiNM$, and set $\rho_0$ to be the density of $\eta_0$ given by \eqref{densitydef}. Let $U_0\in T_{(\eta_0, F_0)}\SemiNM$ be the vector $U_0= (u_0\circ\eta_0, \frac{\rho_0}{\lambda(\rho_0)}\circ\eta_0)$. Then the geodesic $\gamma(t)$ with initial position $\gamma_0$ and initial velocity $U_0$ has tangent vector $\gamma'(t) = DR_{\gamma(t)}\big(u(t), f(t)\big)$, where $u$ satisfies the barotropic evolution equation \eqref{evolution} and $f = \frac{\rho}{\lambda(\rho)}$ for all time. 
\end{corollary}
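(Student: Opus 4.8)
The plan is to reduce the statement to Corollary~\ref{geodesiccorollary} and then use the prescribed initial velocity to identify the auxiliary quantity $q = \lambda(\rho)f$ with the density $\rho$ for all time. Since $\eta_0$ is a diffeomorphism, $\eta(t)$ remains one for $t$ near $0$, so on such an interval I can write $\gamma'(t) = DR_{\gamma(t)}\big(u(t), f(t)\big)$ with $\dot\eta = u\circ\eta$ and $\dot F = f\circ\eta$; Corollary~\ref{geodesiccorollary} then says that the pair $(u, q)$ with $q = \lambda(\rho)f$ solves the system \eqref{evolutionuq}--\eqref{qcontinuity}.

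Next I would note that $q$ and $\rho$ satisfy the very same equation: $q$ obeys \eqref{qcontinuity} by the corollary, while $\rho$ obeys the continuity equation \eqref{continuity} automatically, since differentiating the density identity \eqref{densitydef} along $\dot\eta = u\circ\eta$ gives $\rho_t + \diver(\rho u) = 0$ (as recalled in Section~\ref{backgroundsection}). Both functions are thus transported by the same smooth field $u$ on the compact manifold $M$: concretely $q\circ\eta\,\Jac(\eta)$ and $\rho\circ\eta\,\Jac(\eta)$ are each constant in $t$, the latter equal to $1$ by \eqref{densitydef}. Hence it suffices to compare the two at $t=0$, where the hypothesis $f(0) = \frac{\rho_0}{\lambda(\rho_0)}$ gives $q(0) = \lambda(\rho_0)\,f(0) = \rho_0 = \rho(0)$. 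Therefore $q(t) = \rho(t)$, equivalently $f(t) = \rho(t)/\lambda(\rho(t))$, throughout the interval of existence.

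Finally I would substitute $q = \rho$ into \eqref{evolutionuq} to obtain $u_t + \nabla_u u + \frac{1}{\rho}\,\grad\Big(\frac{\rho^2\varphi(\rho)}{\lambda(\rho)^2}\Big) = 0$, which is precisely the barotropic evolution equation \eqref{evolution} once one recalls the definition \eqref{pressurelambda} of the pressure $p(\rho) = \rho^2\varphi(\rho)/\lambda(\rho)^2$. Together with $\dot\eta = u\circ\eta$ and the identity $f = \rho/\lambda(\rho)$ just established, this is the assertion. I expect the only point needing care to be the uniqueness step propagating $q(0) = \rho(0)$ to $q \equiv \rho$; in the $C^\infty$ setting on a compact $M$ with smooth velocity field this is a routine consequence of the method of characteristics (equivalently, of conservation of transported mass along the flow), so there is no real obstacle, and the remainder is just the specialization of Corollary~\ref{geodesiccorollary} to the given data.
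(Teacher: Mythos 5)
Your proposal is correct and follows essentially the same route as the paper: invoke Corollary \ref{geodesiccorollary}, observe that $q=\lambda(\rho)f$ and $\rho$ satisfy the identical continuity equation so the initial identity $q(0)=\rho_0$ propagates, and then substitute $q=\rho$ into \eqref{evolutionuq} to recover \eqref{evolution} via the pressure definition \eqref{pressurelambda}. Your extra remark making the uniqueness step explicit (conservation of the transported quantity along the flow) is a reasonable elaboration of the paper's one-line assertion, not a different argument.
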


We thus obtain the equations of interest if we restrict the initial velocity in $\SemiNM$ to be something very specific in the function direction (but arbitrary in the diffeomorphism direction). 
This gives a distribution of allowable velocities in $\SemiNM$. It is easy to see that this distribution is \emph{nonholonomic}; in other words, there is no submanifold of $\SemiNM$ for which all geodesics will correspond to barotropic flow. This limits the applicability of our standard geometric techniques, although we could if desired make sense of this situation in the context of the \emph{symmetric product}; see for example \cite{BL} and references therein for the general context.

\begin{remark}\label{entropyremark}
It is natural to ask whether the geodesic equations \eqref{evolutionuq}--\eqref{qcontinuity} have any meaning if our initial condition is not of the special type in Corollary \eqref{compressiblegeodesic}. An interpretation is as follows.
More general compressible fluid mechanics allows for the pressure $p$ in \eqref{evolution} to depend on both the density $\rho$ and the entropy $s$. In the absence of shocks, the entropy of any fluid particle is conserved, which implies the equation 
\begin{equation}\label{entropy}
s_t + u(s) = 0.
\end{equation}
If instead of having $q=\rho$ in \eqref{qcontinuity} we have $q=\rho \zeta(s)$ for some function $\zeta\colon \mathbb{R}\to\mathbb{R}$, then the fact that $\rho_t + \diver{(\rho u)}=0$ implies that $s$ must satisfy \eqref{entropy}, as expected. The corresponding pressure function would then be 
\begin{equation}\label{pressureentropy}
p(\rho, s) = \frac{\rho^2\varphi(\rho)}{\lambda(\rho)^2} \zeta(s)^2.
\end{equation}
In other words, we can represent any compressible fluid as a certain family of geodesics in $C^{\infty}(N, M\times \mathbb{R})$ as long as the pressure is separable as a function of its arguments, and conversely every geodesic in $C^{\infty}(N, M\times \mathbb{R})$ is a compressible fluid flow for some choice of the separable pressure function.
\end{remark}

\subsection{The sign of the curvature}\label{curvaturesignsection}

In spite of Remark \ref{entropyremark} we will still consider in the rest of this paper only barotropic flow. Since our primary motivation for considering the barotropic equations as geodesics is to understand stability in terms of curvature, we will compute the sectional curvature of this manifold. We are interested primarily in those sections where at least one of the vectors lies in our nonholonomic distribution, but we will first work out the general formula.

\begin{theorem}\label{curvaturethm}
Let $U$ and $V$ be right-invariant vector fields on the manifold $\SemiNM$ given by $U_{(\eta, F)} = (u\circ\eta, f\circ\eta)$ and $V_{(\eta, F)} = (v\circ\eta, g\circ\eta)$ for some vector fields $u$ and $v$ on $M$ and functions $f$ and $g$ on $M$. Then the (unnormalized) sectional curvature of the metric \eqref{Semimetric} is given by 
\begin{equation}\label{curvatureformula}
\begin{split}
&\llangle R(U,V)V, U\rrangle_{(\eta, F)} = \int_M \rho \langle R(u,v)v,u\rangle \, d\mu \\
&\qquad\qquad + \int_M \left( \rho \varphi'(\rho) + \frac{\varphi(\rho)^2}{\lambda(\rho)} \right) \, \big[ f \diver{v} - g\diver{u}\big]^2 \, d\mu \\
&\qquad\qquad + \int_M \varphi(\rho) \big[ f^2 Q(v,v)+g^2 Q(u,u) - 2fg Q(u,v)\big] \, d\mu \\
&\qquad\qquad + \int_M \frac{\varphi(\rho)^2}{\rho} \lvert f\grad g - g\grad f\rvert^2 \, d\mu,
\end{split}
\end{equation}
where the symmetric operator $Q$ is defined by 
\begin{equation}\label{Qdef}
Q(u,v) = \diver{(\nabla_uv)} - u(\diver{v}) - (\diver{u}) (\diver{v})
\end{equation} 
and $\langle R(u,v)v,u\rangle$ is the curvature on $M$.
\end{theorem}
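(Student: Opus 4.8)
## Proof proposal

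The plan is to compute the curvature directly from the definition
$R(U,V)V = \nabla_U\nabla_V V - \nabla_V\nabla_U V - \nabla_{[U,V]}V$
using the covariant derivative formula of Lemma~\ref{covariantderivative}, and then pair with $U$ in the metric \eqref{Semimetric}. The key organizing idea is that the connection splits as $\nabla = \widetilde{\nabla} + \Gamma_{\rho}$, where $\widetilde{\nabla}$ is the ``flat'' connection \eqref{flatcovderiv} coming from the product metric with $\lambda(\rho)=\rho$. Since $\widetilde{\nabla}$ differs from the standard product connection on $C^{\infty}(N,M\times\mathbb{R})$ only in that the $M$-factor carries the (nonflat) connection of $M$, its curvature tensor contributes exactly the first term $\int_M \rho\langle R(u,v)v,u\rangle\,d\mu$ (this is essentially the computation already done for the noninvariant metric \eqref{noninvariantmetric} on $C^{\infty}(N,M)$, tensored trivially with the $\mathbb{R}$-factor). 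The remaining terms are the ``difference tensor'' corrections built from $\Gamma_{\rho}$.

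First I would assemble the standard Gauss-type formula for a connection written as $\nabla = \widetilde{\nabla} + \Gamma$, namely
$$
\langle R(U,V)V,U\rangle = \langle \widetilde{R}(U,V)V,U\rangle + \langle (\widetilde{\nabla}_U\Gamma)(V,V) - (\widetilde{\nabla}_V\Gamma)(U,V),U\rangle + \langle \Gamma(U,\Gamma(V,V)) - \Gamma(V,\Gamma(U,V)),U\rangle,
$$
taking care that $\Gamma_\rho$ depends on the base point only through $\rho$, and that the vector fields $U,V$ are right-invariant so that $\widetilde{\nabla}_U$ acting on a right-invariant field picks up only the $(u(g),\nabla_u v)$ piece plus the derivative of $\rho$ along the flow (which is governed by the continuity equation $\frac{d}{dt}\rho = -\diver(\rho u)$). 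The cleanest route is to use the Koszul-style pairing form \eqref{christoffelproduct} of $\Gamma_\rho$ rather than the explicit form \eqref{gammaexplicit}, since \eqref{christoffelproduct} makes the inner products with $(w,h)$ transparent; I would compute $\langle \Gamma_\rho(U,\Gamma_\rho(V,V)),U\rangle$ by first writing $\Gamma_\rho(V,V)$ via \eqref{gammaexplicit} (giving $z$-part $\tfrac1\rho\grad(\varphi g^2)$ and $j$-part $\tfrac{2\varphi}{\lambda} g\diver v$), then feeding that back into \eqref{christoffelproduct}. The quadratic-in-$\Gamma$ terms are what produce the last line $\int_M \tfrac{\varphi^2}{\rho}\lvert f\grad g - g\grad f\rvert^2\,d\mu$ together with part of the $\big[f\diver v - g\diver u\big]^2$ term.

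For the derivative terms $(\widetilde{\nabla}_U\Gamma_\rho)(V,V)$, I would use that along the flow of $u$ the density evolves by the continuity equation, so that $U\big(\varphi(\rho)\big) = -\varphi'(\rho)\diver(\rho u)$ (via Lemma~\ref{functiondiff} applied with $\alpha\equiv 1$), and similarly for $\varphi^2/\lambda$; this is the origin of the coefficient $\rho\varphi'(\rho) + \varphi(\rho)^2/\lambda(\rho)$ in the second line. The operator $Q$ in \eqref{Qdef} arises precisely when one commutes $\diver$ past $\nabla_u$ and collects the terms where the flow-derivative of a divergence meets the Christoffel correction; verifying that all such terms organize into the symmetric expression $f^2 Q(v,v) + g^2 Q(u,u) - 2fg\,Q(u,v)$ is the combinatorial heart of the computation. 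The main obstacle, as usual in these curvature calculations, is bookkeeping: there are many integrations by parts (moving $\grad$ onto $\rho$, $\varphi(\rho)$, or the functions $f,g$), and one must repeatedly use that $\rho$, $\lambda(\rho)$, $\varphi(\rho)$ are functionally related so that cross terms cancel. I would manage this by computing $\llangle R(U,V)V,U\rrangle$ as a single integral, grouping the integrand first by total degree in $(f,g)$ and then by whether it contains $\grad f,\grad g$ or only $\diver u,\diver v$, and checking each group against the corresponding line of \eqref{curvatureformula}; symmetry of the final expression under $(u,f)\leftrightarrow(v,g)$ with an overall sign, together with the known incompressible limit $f=g=0$, provides useful consistency checks along the way.
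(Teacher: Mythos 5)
Your proposal is correct and follows essentially the same route as the paper: the paper's proof is simply the assertion that the formula follows by a long but straightforward computation from the covariant derivative of Lemma \ref{covariantderivative}, which is exactly the computation you organize (splitting $\nabla=\widetilde{\nabla}+\Gamma_{\rho}$ with the flat connection \eqref{flatcovderiv}, using the pairing form \eqref{christoffelproduct} and the variation $\delta\rho=-\diver(\rho u)$ from Lemma \ref{functiondiff}). Your identification of the source of each term in \eqref{curvatureformula} is consistent with that computation, so the plan is sound.
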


\begin{proof}
This is a long but straightforward computation using Lemma \ref{covariantderivative}. 
The fact that $Q$ is symmetric follows from the computation
$$ Q(u,v)-Q(v,u) = \diver{[u,v]} - u(\diver{v}) + v(\diver{u}) = 0.$$ 
\end{proof}

The terms of \eqref{curvatureformula} are for the most part relatively simple, and thus we can determine the sign of the curvature quite easily, at least in special cases.

\begin{corollary}\label{1dcurvaturecorollary}
If $N=M=S^1$ and if 
\begin{equation}\label{nonnegativity}
x\varphi'(x)+\frac{\varphi(x)^2}{\lambda(x)} \ge 0 \quad \text{for all $x\ge 0$},
\end{equation} 
then the sectional curvature \eqref{curvatureformula} of $\SemiNM$ is nonnegative in all sections. 
\end{corollary}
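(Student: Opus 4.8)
The plan is to specialize the general curvature formula \eqref{curvatureformula} to the case $N=M=S^1$ and check that, term by term, everything is manifestly nonnegative once \eqref{nonnegativity} holds. The first term $\int_M \rho \langle R(u,v)v,u\rangle\,d\mu$ vanishes identically because $S^1$ is one-dimensional and hence flat, so $\langle R(u,v)v,u\rangle \equiv 0$. The fourth term $\int_M \frac{\varphi(\rho)^2}{\rho} \lvert f\grad g - g\grad f\rvert^2\,d\mu$ is a nonnegative integrand (a square times the nonnegative factor $\varphi(\rho)^2/\rho$, using $\rho>0$), so it causes no trouble. The hypothesis \eqref{nonnegativity} is precisely what is needed to make the second term $\int_M \bigl(\rho\varphi'(\rho) + \frac{\varphi(\rho)^2}{\lambda(\rho)}\bigr)\,[f\diver v - g\diver u]^2\,d\mu$ nonnegative, since $[f\diver v - g\diver u]^2 \ge 0$ and $\lambda>0$.

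The real work is the third term, $\int_M \varphi(\rho)\bigl[f^2 Q(v,v) + g^2 Q(u,u) - 2fg\, Q(u,v)\bigr]\,d\mu$, which is not obviously signed because $Q$ need not be positive- or negative-definite in general. The key step is to compute $Q$ explicitly on $S^1$. Writing $u = a(x)\,\partial_x$ and $v = b(x)\,\partial_x$ (identifying vector fields with functions), one has $\diver u = a'$, $\nabla_u v = (a b')\partial_x$, so $\diver(\nabla_u v) = (ab')' = a'b' + ab''$, and therefore by \eqref{Qdef},
\begin{equation*}
Q(u,v) = a'b' + ab'' - a(b')' - a'b' = ab'' - ab'' \cdot ?
\end{equation*}
more carefully: $u(\diver v) = a\,(b')' = a b''$, so $Q(u,v) = (a'b' + ab'') - ab'' - a'b' = 0$. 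Thus $Q$ vanishes identically on $S^1$, and the entire third term disappears. (Even if one prefers not to trust this, one can note that the bracket $f^2 Q(v,v) + g^2 Q(u,u) - 2fg\,Q(u,v)$ is exactly $Q(fv - gu, fv - gu)$ by bilinearity and symmetry of $Q$, so it suffices to observe $Q(w,w)=0$ for any single vector field $w$ on $S^1$, which is the computation above with $a=b$.)

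Assembling the pieces: on $S^1$ the first and third terms of \eqref{curvatureformula} vanish, the second term is nonnegative by \eqref{nonnegativity}, and the fourth term is nonnegative because $\rho>0$. Hence $\llangle R(U,V)V,U\rrangle_{(\eta,F)} \ge 0$ for all $U,V$, which is the claim (the unnormalized sectional curvature being nonnegative is equivalent to the normalized one being nonnegative, since the denominator is a positive Gram determinant). The main obstacle is purely the bookkeeping of showing $Q \equiv 0$ on $S^1$; once that identity is in hand the corollary is immediate. I would state the $S^1$ computation of $Q$ as the one substantive lemma-step and then simply read off the sign of each surviving term.
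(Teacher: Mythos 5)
Your proof is correct and follows essentially the same route as the paper: in one dimension the first term vanishes (flatness of $S^1$) and the third term vanishes (your computation that $Q\equiv 0$ on $S^1$, which is exactly why the paper says the first and third terms ``always vanish'' in one dimension), so nonnegativity of the second term under \eqref{nonnegativity} and of the manifestly nonnegative fourth term gives the claim. One small caveat: your parenthetical identity $f^2Q(v,v)+g^2Q(u,u)-2fgQ(u,v)=Q(fv-gu,fv-gu)$ is not legitimate, since $Q$ is only $\mathbb{R}$-bilinear rather than $C^{\infty}(M)$-bilinear (it is not tensorial in its arguments), but this is harmless because your direct computation already shows $Q(u,v)=0$ for \emph{all} pairs of vector fields on $S^1$.
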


\begin{proof}
In one dimension the first and third terms of \eqref{curvatureformula} always vanish, so positivity of the second term is sufficient.
\end{proof}

Note that we cannot prove strict positivity: if $f$, $g$, $u$, and $v$ are $C^{\infty}$ functions with disjoint supports in $S^1$, we will have $\llangle R(U,V)V,U\rrangle=0$.

\begin{remark}\label{pressurecurvature}
For a \emph{polytropic} fluid, where $p(\rho) = A\rho^{\gamma}$ for some constants $A$ and $\gamma>1$, it is easy to check that the condition \eqref{nonnegativity} is equivalent to $\gamma\le 3$. 
This is true in many applications (e.g., for typical gases at room temperature, $\gamma=1.4$; later we will consider the special cases $\gamma=3$ in one dimension and $\gamma=2$ in two dimensions). 
\end{remark}

In higher dimensions the formula gets more complicated, and ensuring positivity of curvature is more difficult. 
We choose two examples for which it is easy to compute the Jacobi fields (and hence completely determine the linearized stability) in order to illustrate the relationship between positive curvature and stability. We emphasize that since the Riemannian exponential map is not smooth or even $C^1$, we cannot rigorously prove any relationship between positive curvature and boundedness of Jacobi fields using tools like the Rauch theorem, even for short time. Hence these computations should be viewed as intuitive guides rather than directly useful for stability analysis.

\begin{exmp}\label{uniformprop}
Suppose $M=\mathbb{T}^2$ with flat Riemannian metric and $u$ is a velocity field of the form $u=\omega(x) \, \frac{\partial}{\partial y}$. Then for any pressure function $p(\rho)$, the velocity field $u$ is a solution of the steady compressible Euler equations 
\begin{equation}\label{steadyeuler}
\nabla_uu = -\frac{1}{\rho} \grad p(\rho), \qquad \diver{(\rho u)} = 0
\end{equation}
with constant density $\rho\equiv 1$. 
Writing $U=\big(u, \rho/\lambda(\rho)\big)$ and $V=\big(v, \rho/\lambda(\rho)\big)$, the curvature is given by
\begin{equation}\label{uniformcurvature}
\llangle R(U, V)V, U\rrangle = \frac{ \varphi'(1) + \varphi(1)^2/\lambda(1)}{\lambda(1)^2} \, \int_{\mathbb{T}^2} (\diver{v})^2 \, dx\,dy.
\end{equation}

This works because $\nabla_uu=0$ for such a velocity field, which forces $\rho$ to be constant. Since $Q(z,z)$ integrates to zero for any velocity field $z$, the corresponding terms disappear.
\end{exmp}

\begin{exmp}\label{rotational}
Suppose $M$ is the unit disc in $\mathbb{R}^2$. Let $u = \frac{\partial}{\partial \theta}$ be a rotational velocity field. Suppose that the pressure function is given by $p(\rho) = c^2\rho^2/2$ for some constant $c$, so that $\lambda(\rho) = \frac{1}{c^2}$ and $\varphi(\rho) = \frac{1}{2c^2}$. 

Then $u$ is a solution of the steady Euler equation \eqref{steadyeuler} with density function $\rho(r) = \frac{r^2}{2c^2}$, and the sectional curvature in directions $U=\big(u, \rho/\lambda(\rho)\big)$ and $V=\big(ku, \rho/\lambda(\rho)\big)$ for some constant $k$ is given by 
$$ \llangle R(U,V)V,U\rrangle = -\frac{\pi (k-1)^2}{48 c^2}.$$ To see this, just notice that since $f=g=\rho/\lambda(\rho)$ in \eqref{curvatureformula}, it reduces to 
$$ \llangle R(U,V)V,U\rrangle = \frac{c^2}{4} \int_M \rho^2 (\diver{z})^2 \, d\mu + \frac{c^2}{2} \int_M \rho^2 Q(z,z) \, d\mu,$$
where $z=v-u$. Now $\diver{z}=0$ while $Q(z,z) = -2(k-1)^2$, and the rest is an easy computation.
\end{exmp}

\begin{remark}
The geometric approach of Smolentsev~\cite{smolentsev}, using the Jacobi metric, yields an alternative sectional curvature formula; let us compute its sign. For simplicity we will work in the one-dimensional case. 
In this case the configuration space is $\Diff(S^1)$, and it is natural to compute the sectional curvature at an $\eta\in\Diff(S^1)$ in a plane spanned by $U=u\circ\eta$ and $V=v\circ\eta$. We can suppose the fields are normalized so that $\llangle U,V\rrangle = \int_{S^1} \rho(x) u(x) v(x) \, dx = 0$, and $\llangle U,U\rrangle=\llangle V,V\rrangle=1$. After some simplifications, the formula of Smolentsev then yields that the sectional curvature $\tilde{K}$ of $\Diff(S^1)$ in the Jacobi metric is given by 
\begin{multline*} 
\tilde{K} = \frac{1}{4(E-\Phi(\eta))^2} \bigg[ 
2 \int_{S^1} \rho p'(\rho) (u'^2 + v'^2) \, dx \\
+ \frac{3\big(\int_{S^1} p'(\rho) \rho' v\,dx\big)^2 
+ 3\big(\int_{S^1} p'(\rho) \rho' u \, dx\big)^2 - \int_{S^1} p'(\rho)^2 \rho'^2/\rho \, dx}{E-\Phi(\eta)}\bigg].
\end{multline*}
Clearly the fact that the first term involves $(u'^2+v'^2)$ implies that for typical $u$ and $v$ we obtain positivity. 

But we now ask whether this curvature can ever be negative. Now the parameter $E$ is arbitrary, as long as $E>\Phi(\eta_0)$, and thus the term $1/[E-\Phi(\eta)]$ can be made arbitrarily large. In addition, for a given $\rho$ and $p$, we can certainly choose $u$ and $v$ so that $\int_{S^1} p'(\rho) \rho' v\, dx = \int_{S^1} p'(\rho) \rho' u \, dx = 0$. Hence for any nonconstant density $\rho$, for values of $E$ sufficiently close to $\Phi(\eta)$ there will be velocity fields $u$ and $v$ such that the sectional curvature is approximately
$$ \tilde{K} \approx -\frac{1}{4(E-\Phi(\eta))^3} \int_{S^1} p'(\rho)^2 \rho'^2/\rho \, dx < 0.$$

We mention this only to emphasize that the Jacobi-metric approach is fundamentally different from our approach as far as curvature and stability predictions go, in spite of the fact that both generate the same geodesic equations. 
\end{remark}

\section{Jacobi fields and linear stability of compressible motion}\label{jacobisection}

In this section we compute some examples of Jacobi fields explicitly, partly to illuminate the Lagrangian stability theory and partly to discuss the properties of the Riemannian exponential map. We work with the case $p(\rho) = \frac{1}{3} \rho^3$ on the circle (where the Euler equations \eqref{evolution}--\eqref{continuity} reduce to the Burgers equation and can be solved fairly explicitly) and the case $p(\rho) = \frac{c^2}{2} \rho^2$ in two dimensions.

Note that although we have formulas for the covariant derivative \eqref{gammaexplicit} and the curvature \eqref{curvatureformula}, which in principle allow us to study the Jacobi equation directly, the formulas are complicated enough and there is enough cancellation in them to make them more trouble than they are worth. Instead we work directly with the linearizations of equations \eqref{evolution}--\eqref{continuity}.

\begin{theorem}\label{linearizations}
Consider a family $\zeta(s,t)$ of geodesics in $\SemiNM$, depending on a small parameter $s$, with tangent vectors satisfying the condition of Corollary \ref{compressiblegeodesic}, with $\zeta(0,t)=(\eta(t), F(t))$ and $\zeta(s,0)=\id$ for all $s$. Let $u$ and $\rho$ be the solutions of \eqref{evolution}--\eqref{continuity}, related to $\eta$ through \eqref{densitydef} and \eqref{flowequation}. 

Let $J(t) = \frac{\partial \zeta}{\partial s}(0, t)$ be the corresponding Jacobi field. Then $J(t) = (j(t)\circ\eta(t), G(t))$ for some vector field $j(t)$ and function $G(t)$, 
which satisfy $\sigma = -\diver{(\rho j)}$ and $\dot{G(t)} = g(t)\circ\eta(t)$ along with the linearized Lagrangian equations 
\begin{equation}\label{linearizedtransportflow}
g = 2\varphi(\rho)\sigma/\lambda(\rho)^2 + j\rho/\lambda(\rho) \quad \text{and} \quad
\frac{\partial j}{\partial t} + [u, j] = v 
\end{equation}
and the linearized Euler equations 
\begin{align}
\frac{\partial \sigma}{\partial t} + \diver{(\sigma u)} + \diver{(\rho v)} &= 0 \label{linearizedcontinuity} \\
\frac{\partial v}{\partial t} + \nabla_uv + \nabla_vu +\grad\big(h'(\rho)\sigma\big) &= 0,\label{linearizedevolution}
\end{align}
where $h$ is the function defined by $h'(\rho) = p'(\rho)/\rho$. 
\end{theorem}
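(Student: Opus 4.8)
The plan is to sidestep the abstract Jacobi equation $\frac{D^2}{dt^2}J+R(J,\dot\gamma)\dot\gamma=0$ entirely, together with the complicated formulas \eqref{gammaexplicit} and \eqref{curvatureformula}, and instead use Corollary~\ref{compressiblegeodesic}: a geodesic whose velocity satisfies the special condition \emph{is} a barotropic solution. So for each fixed $s$ the curve $t\mapsto\zeta(s,t)$ comes with a solution $\big(u(s,\cdot),\rho(s,\cdot)\big)$ of \eqref{evolution}--\eqref{continuity}, a flow $\eta(s,\cdot)$ with $\partial_t\eta=u(s,\cdot)\circ\eta$ and $\eta(s,0)=\id$, the density relation \eqref{densitydef}, and a function $F(s,\cdot)$ with $\partial_t F=f(s,\cdot)\circ\eta$, $f(s,\cdot)=\rho(s,\cdot)/\lambda(\rho(s,\cdot))$, $F(s,0)=0$. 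Since a Jacobi field is by definition $\partial_s$ of such a family, I would introduce $j(t)$ by $\partial_s\eta(0,t)=j(t)\circ\eta(t)$ (legitimate because $\eta(t)$ is a diffeomorphism on $(-T,T)$), $G(t)=\partial_s F(0,t)$, $v(t)=\partial_s u(0,t)$, $\sigma(t)=\partial_s\rho(0,t)$, so that $J(t)=\big(j(t)\circ\eta(t),G(t)\big)$ by construction and $\zeta(s,0)=\id$ forces $j(0)=0$, $G(0)=0$; then the proof is simply to differentiate each of the five relations above with respect to $s$ at $s=0$.

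First I would linearize the kinematic relations. Differentiating \eqref{densitydef} in $s$, using the standard identity $\partial_s\Jac(\eta)=\big((\diver j)\circ\eta\big)\Jac(\eta)$ (itself a consequence of $\partial_s(\eta^*\mu)=\eta^*\Lie_j\mu$), and composing with $\eta^{-1}$, gives $\sigma+j(\rho)+\rho\diver j=0$, i.e.\ $\sigma=-\diver(\rho j)$. Differentiating $\partial_t\eta=u\circ\eta$ in $s$, interchanging $\partial_s\partial_t=\partial_t\partial_s$, and expanding both $\partial_s(u\circ\eta)$ and $\partial_t(j\circ\eta)$ by the chain rule yields the transport equation $\partial_t j+[u,j]=v$. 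Differentiating $\partial_t F=f\circ\eta$ in $s$, with $f=\rho/\lambda(\rho)$ and using $\tfrac{d}{d\rho}\big(\rho/\lambda(\rho)\big)=\big(\lambda(\rho)-\rho\lambda'(\rho)\big)/\lambda(\rho)^2=2\varphi(\rho)/\lambda(\rho)^2$ from \eqref{phidef}, produces $\dot G=g\circ\eta$ with $g$ equal to the Eulerian part $2\varphi(\rho)\sigma/\lambda(\rho)^2$ plus the transport part $j\big(\rho/\lambda(\rho)\big)$, which is the formula in \eqref{linearizedtransportflow}.

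Next I would linearize the Eulerian PDEs directly, since \eqref{continuity} and \eqref{evolution} hold for every $s$ with the same spatial operators. Differentiating \eqref{continuity} in $s$ at $s=0$ gives $\partial_t\sigma+\diver(\sigma u)+\diver(\rho v)=0$ at once. For the momentum equation I would first rewrite $\tfrac1\rho\grad p(\rho)=\grad\big(h(\rho)\big)$ with $h'(\rho)=p'(\rho)/\rho$; then, since the Levi-Civita connection $\nabla$ on $M$ does not depend on $s$ (so $\partial_s(\nabla_u u)=\nabla_v u+\nabla_u v$) and $\partial_s\big(h(\rho)\big)=h'(\rho)\sigma$, differentiating $\partial_t u+\nabla_u u+\grad\big(h(\rho)\big)=0$ in $s$ gives $\partial_t v+\nabla_u v+\nabla_v u+\grad\big(h'(\rho)\sigma\big)=0$, which is \eqref{linearizedevolution}. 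Note that the $v$ appearing in all three linearized equations is the single object $\partial_s u(0,t)$, so no consistency issue arises.

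I do not expect a genuine obstacle: the real content is the recognition, via Corollary~\ref{compressiblegeodesic}, that ``Jacobi field of a special geodesic'' is synonymous with ``solution of the linearized barotropic system together with its associated flow and density relations,'' which turns the theorem into pure $s$-differentiation. The one step needing care is the flow linearization $\partial_t j+[u,j]=v$: one must keep track of which objects are $TM$-valued maps along $\eta$ versus genuine vector fields on $M$, remember that the derivatives of $u\circ\eta$ and $j\circ\eta$ appearing there are ordinary directional derivatives (so their difference is exactly the commutator $[u,j]=\Lie_u j$, with the correct sign), and—more pedantically—check that all the relevant $s$-derivatives exist and commute with $\partial_t$, which is where one uses that $\eta(t)$ remains a diffeomorphism throughout $(-T,T)$.
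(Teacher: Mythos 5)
Your proposal is correct and follows essentially the same route as the paper: differentiate the Eulerian equations \eqref{evolution}--\eqref{continuity} in $s$ to get \eqref{linearizedcontinuity}--\eqref{linearizedevolution}, differentiate the flow equation, the density relation \eqref{densitydef}, and $\partial_t F=\big(\rho/\lambda(\rho)\big)\circ\eta$ to get $\partial_t j+[u,j]=v$, $\sigma=-\diver(\rho j)$, and the formula for $g$. The only difference is that you spell out the chain-rule details (e.g.\ $\tfrac{d}{d\rho}\big(\rho/\lambda(\rho)\big)=2\varphi(\rho)/\lambda(\rho)^2$ and the Jacobian linearization) which the paper leaves implicit.
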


\begin{proof}
Write $\zeta(s,t) = \big(\overline{\eta}(s,t), \overline{F}(s,t)\big)$, with $\overline{u}(s,t)$ and $\overline{\rho}(s,t)$ such that 
$ \overline{\eta}_t = \overline{u}\circ\overline{\eta}$ and $\overline{\rho}\circ\overline{\eta} J(\overline{\eta}) = 1.$ Define 
$ \sigma = \partial_s\big|_{s=0} \overline{\rho}$ and  $v = \partial_s\big|_{s=0} \overline{u}$. Differentiating \eqref{evolution} and \eqref{continuity} with respect to $s$ 
yields \eqref{linearizedcontinuity} and \eqref{linearizedevolution}. 
Differentiating 
$\partial_t \overline{F} = \big(\overline{\rho}/\lambda(\overline{\rho})\big)\circ\overline{\eta}$ 
with respect to $s$ gives 
$$ \frac{\partial G}{\partial t} = \frac{\partial \overline{f}}{\partial s}\circ\overline{\eta} \big|_{s=0} + \Big\langle \grad \overline{f}\circ\overline{\eta}, \frac{\partial \overline{\eta}}{\partial s}\Big\rangle \big|_{s=0},$$
which reduces to the first part of \eqref{linearizedtransportflow}. 
The second part of \eqref{linearizedtransportflow} comes from differentiating the flow equation \eqref{flowequation} and using the general formula
$ \frac{D}{\partial s} \frac{\partial \eta}{\partial t} = \frac{D}{\partial t}\frac{\partial \eta}{\partial s}$ pointwise on $M$ (see e.g., do Carmo~\cite{docarmo}). Finally the relationship $\sigma = -\diver{(\rho j)}$ comes from differentiating equation \eqref{densitydef}.
\end{proof}

\subsection{The one-dimensional case}

Consider the one-dimensional case $M=S^1$ (corresponding to periodic motion on $\mathbb{R}$). 
We will assume $p(\rho) = \frac{1}{3} \rho^3$; this is one of the cases that can be solved explicitly (see Courant-Friedrichs~\cite{CF}, Chap. III,  Sect. 28), which allows us to write down all the Jacobi fields explicitly as well.

\begin{theorem}\label{1dstability}
Suppose $(\eta, F)$ is a compressible geodesic as in Corollary \ref{compressiblegeodesic} with density $\rho$ and velocity $u$ on $S^1$, with pressure function given by $p(\rho) = \rho^3/3$. Consider a Jacobi field along $(\eta, F)$ as in Theorem \ref{linearizations} with initial condition $J(0)=0$ and $J'(0) = (v_0, 0)$. 
Then as long as the solution exists we have
\begin{equation}\label{jacobitime}
\lVert j(t)\rVert_{L^{\infty}} \le t \lVert v_0\rVert_{L^{\infty}}.
\end{equation}
\end{theorem}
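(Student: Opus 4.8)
The plan is to exploit the fact that, for $p(\rho)=\rho^3/3$ on $S^1$, the barotropic system \eqref{evolution}--\eqref{continuity} reduces to two decoupled Burgers equations in the Riemann invariants $r=u+\rho$ and $\ell=u-\rho$ (this is the classical computation in Courant--Friedrichs cited in the text: with $p'(\rho)=\rho^2$ the sound speed is exactly $\rho$, so $r_t+rr_x=0$ and $\ell_t+\ell\ell_x=0$). I would then linearize: the linearized Euler equations \eqref{linearizedcontinuity}--\eqref{linearizedevolution} with $h'(\rho)=p'(\rho)/\rho=\rho$ likewise decouple in the variables $R=v+\sigma$ and $L=v-\sigma$, giving linear transport equations $R_t+(rR)_x=0$ and $L_t+(\ell L)_x=0$ along the respective characteristics. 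The point of passing to Riemann invariants is that the Lagrangian flow maps of $r$ and $\ell$ are, individually, just the (inverse) characteristic maps of scalar Burgers equations, which are monotone and explicitly controllable until the shock time.

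**Controlling $j$.** Next I would relate $j$ to $v$ via the second equation in \eqref{linearizedtransportflow}, namely $j_t+[u,j]=v$, which in one dimension is the scalar ODE $\partial_t j + u j_x - u_x j = v$ along the Lagrangian trajectories $\dot x=u(t,x)$; equivalently $\frac{D}{dt}\big(j(t,\eta(t,x))\big) = $ something controlled by $v$ once one accounts for the $u_x j$ term. Actually the cleanest route is to use $\sigma=-\partial_x(\rho j)$ together with the first equation of \eqref{linearizedtransportflow} to express everything, but I expect it is simpler to observe that $j\circ\eta$ is exactly $\partial_s\eta$ for the family of geodesics, and $v\circ\eta = \partial_t\partial_s\eta$, so that $j(t,\eta(t,x)) = \int_0^t v(\tau,\eta(\tau,x))\,\frac{\partial\eta}{\partial x}(\tau,x)\,\big/\,\frac{\partial\eta}{\partial x}(t,x)$... — more carefully, from $\partial_t(\partial_s\eta)=v\circ\eta$ and $\partial_s\eta(0)=0$ we get $\partial_s\eta(t,x)=\int_0^t v(\tau,\eta(\tau,x))\,d\tau$, hence $j(t,\eta(t,x)) = \partial_s\eta(t,x)/\partial_x\eta(t,x) \cdot \partial_x\eta(t,x)$; unwinding, $\|j(t)\|_{L^\infty}=\|j(t)\circ\eta(t)\|_{L^\infty}\le \int_0^t \|v(\tau)\|_{L^\infty}\,d\tau$. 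So the whole estimate collapses to proving $\|v(t)\|_{L^\infty}\le \|v_0\|_{L^\infty}$ for all $t$ up to the shock.

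**The key $L^\infty$ bound on $v$.** This is the heart of the matter. The claim $\|v(t)\|_{L^\infty}\le\|v_0\|_{L^\infty}$ should follow from the decoupled structure: writing $R=v+\sigma$, $L=v-\sigma$, each satisfies a conservative transport equation $R_t+(rR)_x=0$. Such an equation is $\frac{D}{dt}R = -r_x R$ along characteristics of $r$; but $r$ itself solves Burgers, so $r_x$ along its own characteristics satisfies a Riccati-type equation whose solution is $r_x(t,\chi(t,a)) = r_x(0,a)/(1+t\,r_x(0,a))$, and the flow Jacobian is $\partial_a\chi(t,a)=1+t\,r_x(0,a)$. Therefore $R(t,\chi(t,a))\cdot\partial_a\chi(t,a)$ is constant in $t$: the density $R\,dx$ is transported. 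From the initial condition $J(0)=0$ we get $\sigma_0=0$, so $R_0=L_0=v_0$. Hence $R(t,\chi(t,a)) = v_0(a)/(1+t r_x(0,a))$ and similarly for $L$ along the other characteristic family, and then $v=\tfrac12(R+L)$. I would need $1+t r_x(0,a)>0$ and $1+t\ell_x(0,a)>0$ on $[0,T)$ — but this is exactly the statement that no shock has formed, i.e. the solution still exists, which is the hypothesis. The main obstacle is the bookkeeping here: one must check that "the solution exists" for the full system is equivalent to positivity of both Jacobians, and then verify $|v(t,x)|\le\tfrac12(|R|+|L|)$ does \emph{not} obviously give the bound $\|v_0\|_{L^\infty}$ unless the Jacobian factors are $\ge 1$ — which they are \emph{not} in general (they can dip below $1$ before the shock). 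So I expect the real argument is subtler: rather than the crude bound, one uses that $v\circ\eta$ satisfies, after changing to Lagrangian coordinates on $S^1$, a system that preserves the sup norm because the "source" terms $\nabla_v u$ and $\grad(h'(\rho)\sigma)$ conspire (via the Riemann-invariant decoupling and the conservation form) to give pure transport of $v\circ\eta$ along characteristics with no growth. Pinning down that cancellation — showing $\partial_t(v\circ\eta)=0$, or more likely that $v\circ\eta$ equals a fixed reshuffling of $v_0$ — is the step I would budget the most care for; everything else is then immediate from $\|j(t)\|_{L^\infty}\le\int_0^t\|v(\tau)\|_{L^\infty}\,d\tau = t\|v_0\|_{L^\infty}$.
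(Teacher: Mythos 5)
Your setup (Riemann invariants $u\pm\rho$ solving Burgers, and $v\pm\sigma$ solving the linearized Burgers equation in conservative form) is exactly the paper's starting point, but the way you try to close the estimate has two genuine gaps. First, the reduction $\lVert j(t)\rVert_{L^\infty}\le\int_0^t\lVert v(\tau)\rVert_{L^\infty}\,d\tau$ rests on the identity $\partial_t(\partial_s\bar\eta)=v\circ\eta$, which is false: differentiating $\bar\eta_t=\bar u\circ\bar\eta$ in $s$ gives $\partial_t(j\circ\eta)=(v+u_x\,j)\circ\eta$, i.e.\ the Lagrangian perturbation is stretched by the flow (this is precisely the commutator term in $j_t+[u,j]=v$ of \eqref{linearizedtransportflow}); dropping $u_x j$ is not legitimate, and keeping it only yields a Gr\"onwall-type bound with an $\exp\bigl(\int\lVert u_x\rVert\bigr)$ factor, not \eqref{jacobitime}. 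Second, even granting that reduction, the key claim $\lVert v(t)\rVert_{L^\infty}\le\lVert v_0\rVert_{L^\infty}$ is simply false: as you yourself observe, $v\pm\sigma$ is transported as a density, so it is amplified by the inverse characteristic Jacobian wherever $\partial_x(u_0\pm\rho_0)<0$, and in fact $v$ blows up as $t$ approaches the shock time while $j$ stays bounded by $t\lVert v_0\rVert_{L^\infty}$. The ``cancellation'' you hope for at the level of $v\circ\eta$ does not exist; it happens one integration higher, for $\rho j$.

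That is the paper's actual mechanism, and it is the idea missing from your proposal: use the constraint $(\rho j)_x=-\sigma$ from \eqref{1dj} together with $\beta_\pm(t,x)=v_0(\chi_\pm(t,x))\,\partial_x\chi_\pm(t,x)=\partial_x\bigl[V_0(\chi_\pm(t,x))\bigr]$ (with $V_0'=v_0$) to integrate once in $x$, obtaining
\begin{equation*}
\rho(t,x)\,j(t,x)=\tfrac12\int_{\chi_+(t,x)}^{\chi_-(t,x)}v_0(y)\,dy ,
\end{equation*}
which is \eqref{jacobisoln}. Then the explicit Burgers characteristics give $\chi_\pm(t,x)=x-t\alpha_\pm(t,x)$, so the interval of integration has length $t\,\lvert\alpha_+-\alpha_-\rvert=2t\rho(t,x)$; the factor $\rho$ cancels and \eqref{jacobitime} follows with no bound on $v$ at all. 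Your write-up never produces this integrated identity, and your concluding paragraph explicitly defers the decisive step, so as it stands the argument does not prove the theorem.
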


\begin{proof}
In one dimension with $p(\rho)=\rho^3/3$,  equations \eqref{evolution} and \eqref{continuity} become 
$u_t + uu_x + \rho \rho_x = 0$ and  
$\rho_t + u\rho_x + \rho u_x = 0.$ 
We thus find that the functions 
$\alpha_+ = u+\rho$ and $\alpha_-=u-\rho$
both satisfy Burgers' equation:
$\alpha_t + \alpha \alpha_x = 0$.
Let $\xi_{\pm}$ be the flow of $\alpha_{\pm}$; then of course we have $\partial_t^2 \xi_{\pm} = 0$, 
the solution of which is obviously
$\xi(t,x) = x + t\alpha_{0}(x).$ 
Hence $\alpha_{\pm}$ satisfy the implicit equations 
$ \alpha_{0}(x) = \alpha\big(t, x + t\alpha_0(x)\big).$ 
If  $\chi_{\pm}$ denotes the spatial inverse of $\xi_{\pm}$, 
then we have $\alpha_{\pm}(t,x) = \alpha_{0}\big(\chi_{\pm}(t,x)\big)$.

The Jacobi equations \eqref{linearizedtransportflow}--\eqref{linearizedevolution} in this case are
\begin{alignat}{3}
(\rho j)_x &= -\sigma, &\qquad j_t + uj_x - ju_x &= v, \label{1dj} \\
\sigma_t + (\sigma u + \rho v)_x &= 0, &\qquad v_t + (uv + \sigma \rho)_x &= 0. \label{1dsigv}
\end{alignat}
Define $\beta_+ = v+\sigma$ and $\beta_- = v-\sigma$.
Then the functions $\beta_{\pm}$ satisfy the linearized Burgers equation
$$ \beta_t + \alpha \beta_x + \alpha_x \beta = 0.$$
It is easy to see that the solution is
$$ \beta_{\pm}(t,x) = \frac{v_0\big(\chi(t,x)\big)}{\xi_x\big(t,\chi(t,x)\big)} = v_0\big(\chi(t,x)\big) \chi_x(t,x),$$ 
since $\sigma(0,x)=0$ implies $\beta_{\pm}(0,x)=v_0(x)$.
Solving for $\sigma$ and $v$, equations \eqref{1dj} imply
\begin{equation}\label{jacobisoln}
\rho(t,x) j(t,x) = \frac{1}{2} \int_{\chi_+(t,x)}^{\chi_-(t,x)} v_0(y)\,dy.
\end{equation}

Now since the inverse flows $\chi_{\pm}$ satisfy
$ \chi_{\pm}(t,x) = x - t \alpha_{\pm}(t,x)$, formula \eqref{jacobisoln} yields
\begin{align*} \rho(t,x) \lvert j(t,x)\rvert &\le \frac{1}{2} \sup_{y\in S^1} \lvert v_0(y)\rvert \lvert \chi_-(t,x)-\chi_+(t,x)\rvert \\
&\le
\frac{t}{2} \lVert v_0\rVert_{L^{\infty}} \lvert \alpha_+(t,x)-\alpha_-(t,x)\rvert \\
&= t\, \rho(t,x) \lVert v_0\rVert_{L^{\infty}},
\end{align*}
which implies \eqref{jacobitime}.
\end{proof}

Recall that the curvature is nonnegative by Corollary \ref{1dcurvaturecorollary} but not strictly positive. In fact although our $\gamma=3$ is the critical case in \eqref{nonnegativity}, which makes all terms but the last in \eqref{curvatureformula} vanish, the last term is generally positive, since we cannot expect the function $f=\rho/\lambda(\rho)$ to coincide with the function $g$ satisfying \eqref{linearizedtransportflow}. Hence the curvature is nonnegative but sometimes positive, and the linear growth of Jacobi fields aligns with our intuition and is the best we can expect. This is a sort of weak instability: polynomial but not exponential growth of the Lagrangian perturbations.

We can also use this explicit solution to establish the lack of smoothness of the Riemannian exponential map. Our technique involves analyzing the conjugate points along a particular geodesic, as in \cite{mkdv}, \cite{CK}, and \cite{prestonwhips}. The idea is that although the exponential map is \emph{continuous} in various function spaces (see Kato~\cite{kato}), it cannot be $C^1$ in any Banach space: if it were, then the inverse function theorem and the fact that its derivative at $0$ is the identity map would imply the existence of a small interval on the geodesic in which no two points are conjugate (see e.g., do Carmo~\cite{docarmo}). Hence we will prove the exponential map cannot be $C^1$ by finding a particular geodesic $\gamma$ such that $\gamma(t_n)$ is conjugate to $\gamma(0)$ for a sequence of times $t_n$ converging to $0$.

\begin{theorem}\label{notc1}
Let $H^s(S^1, S^1\times \mathbb{R})$ denote the closure of $C^{\infty}(S^1, S^1 \times \mathbb{R})$ in the Sobolev $H^s$ topology, and consider the weak Riemannian metric with $\lambda(\rho) = 3/\rho$. Then the Riemannian exponential map cannot be $C^1$ for any $s>3/2$. 
\end{theorem}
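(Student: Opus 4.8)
The plan is to exhibit a single explicit geodesic $\gamma(t)$ in $H^s(S^1, S^1\times\mathbb{R})$ along which infinitely many points $\gamma(t_n)$, with $t_n\to 0$, are conjugate to $\gamma(0)$; as explained in the discussion preceding the statement, this contradicts the inverse function theorem and hence rules out $C^1$ smoothness. First I would pick the simplest nontrivial initial data: since $\lambda(\rho)=3/\rho$ corresponds exactly to $p(\rho)=\rho^3/3$ (check via \eqref{pressurelambda} and \eqref{phidef}, where $\varphi(\rho)=\tfrac12(\lambda-\rho\lambda')=3/\rho$, so $p(\rho)=\rho^2\varphi/\lambda^2 = \rho^3/3$), the whole explicit machinery of the one-dimensional subsection applies. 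I would take $u_0$ to be a nonconstant function on $S^1$ and $\rho_0$ constant (so $\alpha_\pm(0)=u_0\pm\rho_0$ are just translates of $u_0$, and $\rho$ stays bounded away from the shock for $t$ in a fixed interval), giving a genuine compressible geodesic via Corollary \ref{compressiblegeodesic}.

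Next I would use the explicit Jacobi field formula \eqref{jacobisoln}, namely $\rho(t,x)j(t,x) = \tfrac12\int_{\chi_+(t,x)}^{\chi_-(t,x)} v_0(y)\,dy$, together with the $G$-component determined by Theorem \ref{linearizations}, as the complete description of solutions to the linearized equations with $J(0)=0$. A conjugate point at time $t_n$ is precisely a choice of nonzero $v_0$ (the initial data $J'(0)=(v_0,0)$, which here is the only freedom since $J(0)=0$ forces the rest) for which $J(t_n)=0$, i.e. $j(t_n,\cdot)\equiv 0$ and $G(t_n)\equiv 0$ on $S^1$. From \eqref{jacobisoln}, $j(t_n,x)=0$ for all $x$ means $\int_{\chi_+(t_n,x)}^{\chi_-(t_n,x)} v_0 = 0$ for every $x$; I would analyze this as a linear condition on $v_0$. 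The interval of integration has endpoints $\chi_\pm(t_n,x) = x - t_n\alpha_\pm(t_n,x)$, so as $x$ ranges over $S^1$ one gets a family of "moving window" averaging conditions. The cleanest route is to differentiate in $x$: the map $x\mapsto$ (the integral) is zero iff its $x$-derivative is zero and it vanishes at one point; the derivative is $v_0(\chi_-)\partial_x\chi_- - v_0(\chi_+)\partial_x\chi_+$, which after changing variables becomes a relation like $v_0(\chi_-(t_n,x)) \equiv v_0(\chi_+(t_n,x))$ composed with the relevant Jacobians — effectively forcing $v_0$ to be invariant under a certain circle map built from $\chi_+$ and $\chi_-^{-1}$. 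I would then show that for a discrete sequence $t_n\to 0$ this composed map is a rotation (or has periodic orbits) of the right type so that a nonzero smooth $v_0$ solving the invariance condition exists, while also arranging the $G$-component to vanish (this is one extra scalar-valued condition per time, absorbed by the remaining freedom, or killed by a symmetry/parity choice of $v_0$).

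The main obstacle I expect is precisely this last bookkeeping: verifying that the conjugate-point condition genuinely has nontrivial solutions $v_0$ for a sequence $t_n\downarrow 0$, rather than only isolated large $t$, and simultaneously handling the $\mathbb{R}$-component $G$. The accumulation of conjugate points at $t=0$ is the feature that makes the exponential map non-$C^1$, and it should come from the fact that $\chi_\pm(t,x)\to x$ as $t\to 0$ so the two endpoints of the integration interval in \eqref{jacobisoln} coalesce — the "window" shrinks, and the averaging operator degenerates to the identity, so one expects its kernel (the conjugate directions) to appear along a sequence $t_n\to 0$ whenever $u_0$ is nonconstant, much as in \cite{mkdv}, \cite{CK}, \cite{prestonwhips}. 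I would make this rigorous by reducing to a concrete $u_0$ (say $u_0(x)=\varepsilon\cos x$ for small $\varepsilon$, or even a piecewise-linear profile), writing $\chi_\pm$ explicitly to leading order, and exhibiting the eigenvalue crossings directly; the Sobolev threshold $s>3/2$ enters only to guarantee $H^s\hookrightarrow C^1$ so that $\Diff^s(S^1)$ is a group and the flows $\xi_\pm,\chi_\pm$ make sense, and the conjugate points persist in the $H^s$ topology because the finite-dimensional-looking obstruction (the shrinking-window averaging operator) is Fredholm of index zero on $H^s$. Once a single such geodesic with $t_n\to 0$ is produced, the contradiction with $C^1$-ness of $\Exp$ is immediate from the inverse function theorem, completing the proof.
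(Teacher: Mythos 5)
Your overall strategy (find one geodesic with conjugate points $\gamma(t_n)$, $t_n\to 0$, and invoke the inverse function theorem) is exactly the paper's, and your reduction to the explicit one-dimensional machinery with $p(\rho)=\rho^3/3$ and formula \eqref{jacobisoln} is correct. But the step you yourself flag as ``the main obstacle'' --- producing, for a sequence $t_n\downarrow 0$, a nonzero $v_0$ with $j(t_n,\cdot)\equiv 0$ \emph{and} $G(t_n)\equiv 0$ --- is not carried out, and the mechanism you lean on would not carry it out. You insist on a nonconstant $u_0$ and suggest the conjugate points accumulate because the window $[\chi_+,\chi_-]$ shrinks as $t\to 0$; but for a \emph{fixed} $v_0$ the condition $\int_{\chi_+(t,x)}^{\chi_-(t,x)} v_0\,dy\equiv 0$ generically has no nonzero solution for small $t$ (the normalized averaging operator tends to the identity, whose kernel is trivial), so the accumulation cannot come from the degeneration of the window or from nonconstancy of $u_0$. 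The accumulation must come from letting the \emph{frequency} of $v_0$ blow up, with a different $v_0$ for each $t_n$. With nonconstant $u_0$ your invariance condition becomes one about densities invariant under a genuinely nonlinear circle map built from $\chi_-^{-1}\circ\chi_+$, together with the simultaneous vanishing of $G$; nothing in the proposal shows this system has nontrivial solutions at times tending to $0$, and it is not clear it does in any tractable way.

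The paper sidesteps all of this by taking the \emph{constant} solution $u\equiv\rho\equiv 1$ (which your setup excludes). Then $\alpha_+\equiv 2$, $\alpha_-\equiv 0$, $\chi_+(t,x)=x-2t$, $\chi_-(t,x)=x$, and for $v_0(x)=\cos(nx)$ one gets explicitly
\begin{equation*}
j(t,x)=\frac{\sin(nt)\cos\big(n(x-t)\big)}{n},\qquad G(t,x)=\frac{4}{3n}\,\sin(nx)\,\sin^2(nt/2),
\end{equation*}
so both components vanish at $T=2\pi m/n$, giving conjugate times $2\pi/n\to 0$ as $n\to\infty$ along a single smooth (hence $H^s$) geodesic; no Fredholm or persistence argument is needed. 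Note also that the $G$-component is not bookkeeping to be ``absorbed'': even in this simplest case $j$ vanishes at the times $\pi m/n$ while $G$ vanishes only at $2\pi m/n$, so it halves the candidate conjugate times and must be checked explicitly. As written, your proposal identifies the right target but leaves the decisive construction open and points it in a direction (nonconstant $u_0$, shrinking-window heuristic) that does not lead to the conclusion.
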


\begin{proof}
By \eqref{phidef} and \eqref{pressurelambda}, this choice of $\lambda$ gives $p(\rho)=\rho^3/3$ as in Theorem \ref{1dstability}. A particular solution of the equaitons 
is $\rho(t,x)=u(t,x)=1$ for all $t$ and $x$. Let $\gamma=(\eta, F)$ be the corresponding geodesic as in Corollary \ref{compressiblegeodesic}; to prove that $\gamma(0)$ and $\gamma(T)$ are conjugate, we need to find a Jacobi field with $J(0)=J(T)=0$. By theorem \ref{linearizations} we can write $J(t) = (j\circ\eta, G)$ where $j$ satisfies \eqref{1dj} and $\frac{\partial G}{\partial t} = g\circ\eta$, and we can compute that 
$g(t,x) = 
\frac{2}{3} \sigma(t,x).
$

For this solution we have (as in the proof of Theorem \ref{1dstability}) that $\alpha_+(t,x) = 2$ and $\alpha_-(t,x)=0$ for all $t$ and $x$, so that the inverse flows are $\chi_+(t,x) = x-2t$ and $\chi_-(t,x) = x$. 
Thus we obtain 
$$ \sigma(t,x) = \tfrac{1}{2} \big(v_0(x-2t) - v_0(x)\big) \quad \text{and}\quad v(t,x) = \tfrac{1}{2}\big( v_0(x-2t)+v_0(x)\big).$$ 
Formula \eqref{jacobisoln} yields 
$$ j(t,x) = \frac{1}{2} \int_{x-2t}^x v_0(y) \, dy$$ 
and since the flow of $u$ is $\eta(t,x) = x+t$, we obtain 
$$ G(t,x) = \int_0^t g\big(\tau, \eta(\tau, x)\big) \, d\tau = \frac{1}{3} \int_0^t \big(v_0(x-\tau) - v_0(x+\tau)\big) \, d\tau.$$

Now suppose $v_0(x) = \cos{nx}$ for some integer $n$. Then we have 
\begin{align*}
j(t,x) &= \frac{\sin{nt}\cos{n(x-t)}}{n} \\
G(t,x) &= \frac{4}{3n} \sin{(nx)} \sin^2{(nt/2)},
\end{align*}
and we see that there are conjugate points at $T=\frac{2\pi m}{n}$ for any positive integer $m$. 
Thus there is no neighborhood of $0$ on which the derivative of the exponential map is an invertible linear map.
\end{proof}

\subsection{The two-dimensional case}

We can do everything fairly explicitly in one dimension, at least assuming a simple pressure function. In two or more dimensions, explicit nonsteady solutions are much harder to come by. Hence we will just work out the Jacobi fields and Lagrangian stability for the two special steady solutions already studied in Section \ref{curvaturesignsection}, via Proposition \ref{uniformprop} and Example \ref{rotational}.

\begin{theorem}\label{uniformjacobi}
Suppose $M=\mathbb{T}^2$.
Suppose $u = \omega \, \frac{\partial}{\partial y}$, for some constant $\omega$, is a steady solution of the Euler equation \eqref{steadyeuler}, with constant uniform density $\rho\equiv 1$. Let $c$ be the speed of sound, defined by $c^2 = p'(1)$, for an arbitrary function $p(\rho)$. Let $\gamma=(\eta, F)$ be the geodesic as in Corollary \ref{compressiblegeodesic}. Let $J=(j\circ\eta, G)$ be a Jacobi field along $\gamma$ as in Theorem \ref{linearizations}, with initial conditions $j(0,x)=0$ and $j_t(0,x) = v_0(x)$ for some velocity field $v_0$. 
Then $j$ is bounded in time if and only if $v_0$ is a gradient; otherwise $j$ grows linearly in time at every point.
\end{theorem}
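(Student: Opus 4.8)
The plan is to reduce Theorem~\ref{uniformjacobi} to the explicit solution of a constant-coefficient linear system for the perturbations $(\sigma, v)$, then read off the growth of $j$ from the formula $j_t + [u,j] = v$ of \eqref{linearizedtransportflow}. Since $u = \omega \,\partial_y$ with $\rho\equiv 1$, the linearized Euler equations \eqref{linearizedcontinuity}--\eqref{linearizedevolution} become, writing $v = a\,\partial_x + b\,\partial_y$,
\begin{align*}
\sigma_t + \omega \sigma_y + a_x + b_y &= 0, \\
a_t + \omega a_y + c^2 \sigma_x &= 0, \\
b_t + \omega b_y + c^2 \sigma_y &= 0,
\end{align*}
using $h'(1) = p'(1) = c^2$ and that $\nabla_u v + \nabla_v u$ collapses because $u$ has constant coefficients and $\nabla u$ has a single off-diagonal entry. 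First I would pass to the comoving coordinate $(x, y-\omega t)$ (equivalently, expand $\sigma, a, b$ in a Fourier series $e^{i(mx+ny)}$ and substitute), which removes the transport terms and leaves a constant-coefficient linear ODE system in $t$ for each Fourier mode.

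Next I would diagonalize mode by mode. For the zero spatial mode $(m,n)=(0,0)$ the system is trivial and contributes only constants; for a nonzero mode $(m,n)$ one finds that the combination $m a + n b$ (the "divergence part" $\widehat{\diver v}$) couples with $\sigma$ to give a harmonic oscillator at the acoustic frequency $c|(m,n)|$ in the comoving frame, while the orthogonal combination $-n a + m b$ (the "curl part", i.e. the gradient-orthogonal component of $v$) is simply transported and hence constant along the comoving flow. In particular $\sigma$ and the gradient part of $v$ remain bounded for all time and the only source of unboundedness in $v$ is the curl part, which is constant (not growing) in the moving frame but becomes a nonzero constant vector field. Then I would integrate the Lagrangian equation $j_t + [u,j] = v$: since $[u,j] = \omega\,\partial_y j$, in the comoving frame this reads $\tilde j_t = \tilde v$, so $j(t) = \int_0^t (\text{comoving } v)\,d\tau$ starting from $j(0)=0$. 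The bounded oscillatory (gradient) part of $v$ integrates to something bounded, while a genuinely nonzero curl part of $v_0$ produces a time-independent vector field inside the integral, hence $j$ grows linearly. Conversely, if $v_0$ is a gradient, the curl part vanishes identically, $v$ stays in the gradient sector for all time, and the oscillatory integral stays bounded, giving boundedness of $j$.

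The step I expect to be the main obstacle is handling the resonance/degeneracy at the zero acoustic frequency and, more importantly, being careful about what "bounded" means pointwise versus mode-by-mode: one must check that the linear-in-$t$ growth really does occur \emph{at every point} when $v_0$ is not a gradient (so that no cancellation between modes conspires to keep $j$ bounded), and that the bounded case is genuinely uniform in $t$ rather than merely termwise bounded. Concretely I would exhibit the curl part of $v_0$ explicitly — write $v_0 = \grad \phi + \grad^\perp \psi$ via the Hodge decomposition on $\mathbb{T}^2$, note the mean-zero curl part $\grad^\perp\psi$ is transported without change by the comoving flow, and conclude $j(t,x) = t\,(\grad^\perp\psi)\circ(\text{flow}) + O(1)$, whose sup-norm grows exactly linearly unless $\psi$ is constant; summability of the oscillatory remainder follows from rapid decay of Fourier coefficients of the smooth datum $v_0$. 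The remaining verifications — that the acoustic-oscillator amplitudes are bounded uniformly in the mode, and that there is no hidden secular term from the $(0,0)$ mode given $j(0)=0$ — are routine once the diagonalization is in place.
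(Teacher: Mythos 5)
Your overall strategy is the same as the paper's: pass to the comoving frame, split $v$ into a gradient (acoustic) part that oscillates at frequency $c\lvert k\rvert$ and a divergence-free part that is merely transported, then integrate $j_t+\omega j_y=v$ to see that the gradient part contributes a bounded term while the transported part contributes a secular term $t\cdot(\cdot)$. (The paper does this by eliminating $\sigma$ to get $(\partial_t+\omega\partial_y)^2v=c^2\grad\diver v$ and expanding in Laplacian eigenfunctions; your first-order mode-by-mode diagonalization is equivalent, and your observation that $\curl v$ is exactly transported is if anything slightly cleaner than invoking the initial condition \eqref{vinits}.)

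However, there is a concrete error in your handling of the harmonic part, i.e.\ the $(0,0)$ Fourier mode. On $\mathbb{T}^2$ the Hodge decomposition is $v_0=\grad\phi+\grad^{\perp}\psi+h$ with $h$ a \emph{constant} (harmonic) vector field, and your splitting $v_0=\grad\phi+\grad^{\perp}\psi$ silently drops $h$. Since $\grad\sigma$ has zero mean on the torus, the spatial mean of $v$ is conserved: $\overline{v}(t)=\overline{v_0}=h$, and hence $\overline{j}(t)=t\,h$. So your parenthetical claim that ``there is no hidden secular term from the $(0,0)$ mode'' is false precisely when $\overline{v_0}\neq 0$; as written, your criterion ``$j$ bounded iff $\psi$ is constant'' would wrongly classify $v_0=\grad\phi+h$ with $h\neq 0$ as giving a bounded Jacobi field, even though such $v_0$ is not a gradient (gradients on $\mathbb{T}^2$ have zero mean) and the theorem asserts linear growth. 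The fix is what the paper does: put the entire divergence-free part $w$ of $v_0$ (curl part \emph{plus} harmonic part) into the transported sector, so that $j(t)=\sum_n\frac{a_n\sin(c\lambda_nt)}{c\lambda_n}\grad\phi_n(x,y-\omega t)+t\,w(x,y-\omega t)$, and $j$ is bounded iff $w\equiv 0$, i.e.\ iff $v_0$ is a gradient. With that correction (and noting that $\nabla_vu=0$ here simply because $\omega$ is constant, not because of an off-diagonal entry), your argument goes through and coincides with the paper's.
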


\begin{proof}
Equations \eqref{linearizedtransportflow}--\eqref{linearizedevolution} reduce in this case to 
$$ (\partial_t + \omega \partial_y)j = v, \quad (\partial_t + \omega \partial_y)\sigma + \diver{v} = 0, 
\quad (\partial_t + \omega \partial_y)v + c^2 \grad \sigma = 0.$$ 
Eliminating $\sigma$ we get $$(\partial_t+\omega \partial_y)^2 v = c^2 \grad \diver{v},$$
with initial condition satisfying
\begin{equation}\label{vinits}
v_t(0,x,y) + \omega v_y(0,x,y) = 0.
\end{equation}
It is easy to write down solutions: we expand $v$ using the Hodge decomposition as $v = \grad f + w$, 
where $f$ is some mean-zero function and $w$ is some divergence-free vector field which both depend on time. Then $f$ and $w$ satisfy the equations 
$$ (\partial_t + \omega \partial_y)^2 f = c^2 \Laplacian f \qquad \text{and}\qquad (\partial_t+\omega \partial_y)^2 w = 0.$$
The solution satisfying \eqref{vinits} is clearly
\begin{align*} 
f(t,x,y) &= \sum_n a_n \cos{(c \lambda_n t)}\phi_n(x, y-\omega t) \\
w(t,x,y) &= z(x, y-\omega t),
\end{align*}
where $\Laplacian \phi_n = -\lambda_n^2 \phi_n$, the numbers $a_n$ are some constants, and $z$ is some divergence-free field.
Integrating once to solve for $j$ we obtain
$$ j(t,x,y) = \sum_n \frac{a_n \sin{(c\lambda_n t)}}{c\lambda_n} \grad \phi_n(x,y-\omega t) + tz(x, y-\omega t).$$
Every component of the gradient part is bounded in time, while any nontrivial divergence-free part grows in time.
\end{proof}

Note that we obtain the behavior of Jacobi fields regardless of the curvature: in Proposition \ref{uniformprop} we showed the curvature was nonnegative if and only if $\varphi'(1) + \varphi(1)^2/\lambda(1)\ge 0$, which e.g., happens for pressure functions $p(\rho) = A\rho^{\gamma}$ if and only if $1<\gamma\le 3$. On the other hand even if this quantity is negative, it does not change the Jacobi fields in any way.

Now we consider our final example: a rigidly rotating disc in the plane, as in Example \ref{rotational}. We assume the steady solution takes the form $u = \omega \, \frac{\partial}{\partial \theta}$ for some constant $\omega$, and that the pressure function is given by $p(\rho) = c^2 \rho^2/2$, so that the Euler equation \eqref{evolution} is 
$ u_t + \nabla_uu = -c^2 \grad \rho$ for some constant $c$ (the speed of sound). In this case we must have $\rho'(r) = \omega^2 r/c^2$. Now the stability analysis depends on the boundary conditions we use. For our computations it is simpler to work with the free-boundary case (without surface tension), in which the density is specified on the boundary by $\rho=\rho_0$ for some constant. If instead we worked with a fixed boundary (e.g., the fluid in a solid container), then the density would be unspecified at the boundary but the velocity would be required to be tangent to the boundary.

We may assume by rescaling units that the boundary is at $r=1$ and that $\rho_0=1$. Then $\rho(1)=1$ and $\rho'(r)=\omega^2r/c^2$ imply that $\rho(r) = \rho_0 - \frac{\omega^2}{2c^2} + \frac{\omega^2 r^2}{2c^2}$. Since the density must always be positive, we must have $\rho_0 > \frac{\omega^2}{2c^2}$ for this to make sense. Hence the velocity may exceed the speed of sound but not more than by a factor of $\sqrt{2}$. Although in this supersonic region the evolution equations are not hyperbolic, and hence the standard existence theory breaks down, there is no apparent problem with the linearized equations in Theorem \ref{linearizations}.

Now we analyze the linear stability of the uniformly rotating fluid. Although stability can be analyzed for such fluids (and in much greater generality; see e.g., \cite{HMRW}), we are interested in obtaining the explicit time-dependence of solutions of the linearized equations in order to study growth of the Jacobi fields as in Theorem \ref{linearizations}. What is interesting about this computation is that we can express all the perturbations in terms of discrete Fourier modes: in general the fact that the corresponding operators are not self-adjoint means that we should expect a continuous spectrum as well as a discrete spectrum, but here the spectrum is purely discrete. The other reason this is interesting is that we don't require an ansatz for the growth of a perturbation; rather, the dependence is a consequence of the equations.

The only boundary condition for the linearized equations is that $\sigma=0$ on the boundary, a result of $\rho$ being prescribed. We note that Beyer and G\"unther~\cite{BG} analyzed the Jacobi equations in this situation, in greater generality but less detail.

\begin{theorem}\label{eulerianrigidrotation}
Suppose $M$ is the unit disc in $\mathbb{R}^2$, with $u=\omega \, \frac{\partial}{\partial \theta}$, a rigid rotation for some constant $\omega$. Further suppose that the barotropic fluid is described by the pressure function $p(\rho)=c^2\rho^2/2$ for some constant $c$. We assume the boundary conditions are such that the the density is a prescribed constant but the velocity is unconstrained.

Then all solutions of the linearized equations \eqref{linearizedcontinuity}--\eqref{linearizedevolution} are bounded in time, and we can write the solutions explicitly as sums of functions which look like $Q(t,r,\theta) = e^{iyt} q(r) e^{in(\theta-\omega t)}$ for some real $y$, some $n\in\mathbb{Z}$, and some function $q$.
\end{theorem}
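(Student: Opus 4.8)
The plan is to reduce the linearized system \eqref{linearizedcontinuity}--\eqref{linearizedevolution} around the rigid rotation $u = \omega\,\partial_\theta$ to a single scalar PDE in the corotating frame, separate variables, and show that the resulting radial eigenvalue problem is of a classical Sturm--Liouville type whose spectrum is real and discrete. First I would change to the corotating angular variable $\vartheta = \theta - \omega t$, which replaces $\partial_t + \omega\partial_\theta$ by a plain $\partial_t$ in the operators appearing in Theorem \ref{linearizations}; with $p(\rho)=c^2\rho^2/2$ one has $h'(\rho)=c^2$ constant, so the evolution equation becomes $v_t + \nabla_u v + \nabla_v u + c^2\grad\sigma = 0$ with $\nabla_u v + \nabla_v u$ a first-order operator with smooth (in fact polynomial in $r$) coefficients coming from the flat connection on the disc in polar coordinates. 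Eliminating $\sigma$ via $\sigma_t + \diver(\sigma u) + \diver(\rho v) = 0$ gives a second-order-in-time wave-type equation for $v$ alone, $(\partial_t + \omega\partial_\theta)^2 v = c^2\grad\diver(\rho v) + (\text{rotation terms})$, with the single boundary condition $\sigma|_{r=1}=0$, which by the continuity equation translates into a condition on $\diver(\rho v)$ and its corotating time derivative at $r=1$.

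Next I would expand in Fourier modes in $\vartheta$, writing every field as $e^{in\vartheta}$ times a function of $(t,r)$; because the background is $\vartheta$-independent the modes decouple, and within each mode the $t$-dependence of the coefficients has been removed by the corotating change of variables. This lets me look for solutions of the form $e^{iyt}e^{in\vartheta}$ times radial profiles, reducing each mode to an ODE eigenvalue problem in $r$ on $(0,1)$ for the pair of radial components of $v$ (or equivalently for a single scalar potential, since on a disc a vector field is determined by a Helmholtz-type pair of scalars). I expect this radial system, after clearing the Coriolis/centrifugal cross terms, to take the form of a self-adjoint Sturm--Liouville problem — weighted by $\rho(r) = \rho_0 - \tfrac{\omega^2}{2c^2} + \tfrac{\omega^2 r^2}{2c^2}$, which is \emph{strictly positive} precisely under the hypothesis $\rho_0 > \omega^2/(2c^2)$ noted in the text — with the regular singular point at $r=0$ forcing the physically admissible (bounded) solution and the condition at $r=1$ coming from $\sigma=0$. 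Self-adjointness with a positive weight gives a real, discrete spectrum $\{y\}$ of eigenvalues with eigenfunctions $q(r)$, and the general solution is then the (convergent) superposition of the building blocks $e^{iyt}q(r)e^{in(\vartheta)} = e^{iyt}q(r)e^{in(\theta-\omega t)}$ claimed in the statement; boundedness in time is immediate since $|e^{iyt}|=1$ for real $y$.

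The main obstacle I anticipate is establishing that the radial problem is genuinely of self-adjoint Sturm--Liouville type with real spectrum, rather than merely non-self-adjoint with possible complex eigenvalues or continuous spectrum. The linearized operator is \emph{not} obviously symmetric: the rigid rotation contributes antisymmetric Coriolis-type terms and the $\nabla_v u$ term is not self-adjoint on its own. The key will be to identify the correct energy-type bilinear form — presumably the second variation of kinetic plus potential energy, or equivalently the quadratic form built from the metric \eqref{Semimetric} restricted to the relevant section — in which the operator governing the corotating dynamics \emph{is} symmetric, with the rotation terms reorganizing into a symmetric ``centrifugal'' potential plus a term that is handled by the substitution $e^{iyt}$. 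Concretely, one expects an identity of the shape $(\partial_t+\omega\partial_\theta)^2 = -\omega^2 n^2 + (\text{lower order})$ acting on the $n$th mode, so that shifting $y$ absorbs the $\omega^2 n^2$ and what remains is a positive operator; the positivity of $\rho(r)$ in the subsonic-to-mildly-supersonic regime $\rho_0>\omega^2/(2c^2)$ is exactly what keeps the weight and the principal symbol nondegenerate down to $r=0$, so that the spectral theorem for Sturm--Liouville operators with one regular singular endpoint applies and delivers the purely discrete real spectrum. Once that structural fact is in place, the explicit form of the solutions and their boundedness follow by routine separation of variables and summation.
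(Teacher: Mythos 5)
Your overall strategy (corotating frame, Fourier modes in $\theta$, radial reduction) matches the paper's first steps, but the heart of your argument --- that the reduced radial problem is a self-adjoint Sturm--Liouville problem with positive weight, so that reality of the frequencies $y$ is automatic --- is exactly the point that fails, and your ``main obstacle'' paragraph gestures at a fix without supplying one. In the paper, after decomposing the momentum $\rho v=\grad f+\sgrad g$ and passing to modes, one does \emph{not} get a scalar eigenvalue problem in which $y$ enters linearly: the divergence-free part is genuinely coupled to the potential part and to $\sigma$ through Coriolis-type terms, giving for each mode the $3\times 3$ system \eqref{timeonlylineuler}, whose characteristic polynomial $z^3+(c^2\lambda_{kn}+4\omega^2)z+2in\omega^3=0$ has a \emph{complex} constant term. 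The only self-adjoint operator in sight is the auxiliary weighted Laplacian $\Lambda\sigma=\diver(\rho\grad\sigma)$, whose spectrum is indeed real and discrete; but that does not make the evolution operator anti-self-adjoint, and the paper explicitly warns that the linearized operator is not self-adjoint. No frequency shift of the form ``absorb $\omega^2n^2$ into $y$'' removes the first-order, non-symmetric coupling responsible for the term $2in\omega^3$, so boundedness of the modes is not a structural consequence of symmetry and positivity of $\rho$.

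What is actually needed, and what your proposal is missing, is a quantitative spectral estimate: writing $z=iy$ turns the characteristic equation into the real cubic \eqref{ycharpoly}, which has three distinct real roots only under the discriminant condition $q^2<p^3$, i.e.\ $c^2\lambda_{kn}>\omega^2(3n^{2/3}-4)$ for all $k,n$. The paper proves this by bounding the lowest eigenvalue $\lambda_{1n}$ of $\Lambda$ from below via a Rayleigh quotient with the weight $\rho(r)=\rho_0-\tfrac{\omega^2}{2c^2}+\tfrac{\omega^2 r^2}{2c^2}$, splitting it into a Bessel quotient (minimized by $J_n(c_nr)$) plus a second quotient whose infimum $n^2+1$ requires a separate computation with the substitution $h(r)=rf(r)$ and logarithmic test functions, and then checking $n^2+7>6n^{2/3}$. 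Without this eigenvalue lower bound the conclusion could simply be false for the reduced ODE system: if $c^2\lambda_{kn}$ were small compared to $\omega^2 n^{2/3}$ the cubic would acquire complex roots and the corresponding mode would grow exponentially. So the gap is concrete: you must either produce the (nonobvious) conserved quadratic form in which the full coupled system is skew, or, as the paper does, carry out the discriminant analysis together with the Rayleigh-quotient estimate on $\Lambda$; asserting Sturm--Liouville self-adjointness of the radial problem does neither.
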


\begin{proof}
The linearized equations \eqref{linearizedcontinuity}--\eqref{linearizedevolution} are
\begin{align}
\sigma_t + \diver{(\rho v + \sigma u)}&= 0, \label{stilllinearizedcontinuity} \\
v_t + \nabla_uv + \nabla_vu &= -c^2 \grad \sigma, \label{specificlinearizedevolution}
\end{align}
with boundary condition $\sigma=0$ when $r=1$.
To write these equations more explicitly, we decompose the vector field $\rho v$ into its gradient and divergence-free part (using the Hodge decomposition); more specifically 
\begin{equation}\label{vfg}
\rho v = \grad f  + \sgrad g = \frac{1}{\rho} \Big( \big( f_r + g_{\theta}/r\big) \, \partial_r + \big( f_{\theta}/r^2 - g_r/r\big) \, \partial_{\theta}\Big).
\end{equation}
Without loss of generality we can assume that $g$ vanishes on the boundary, although we cannot say anything yet about $f$ on the boundary.

We easily compute that
$$\nabla_vu = \frac{\omega}{r} \, \grad g - \frac{\omega}{r} \, \sgrad f.$$
Using the explicit formulas for $\rho$, $u$, and $v$, we can rewrite \eqref{stilllinearizedcontinuity} as 
\begin{equation}\label{reallyspecificcontinuity}
\sigma_t + \Laplacian f + \omega \sigma_{\theta} = 0.
\end{equation}
Taking the divergence and curl, we obtain 
\begin{align}
\sigma_t + \omega \sigma_{\theta} + F &= 0 \label{homo1} \\
F_t + \omega F_{\theta} + 2\omega G + c^2 \Lambda \sigma &= 0 \label{homo2} \\
G_t + \omega G_{\theta} - 2\omega F + \omega^2 \sigma_{\theta} &= 0, \label{homo3}
\end{align}
where $\Lambda \sigma = \diver{(\rho \grad \sigma)}$, $F = \Laplacian f$, and $G=\Laplacian g$.
By our free-boundary assumption, $\sigma$ vanishes on the boundary, and compatibility requires that $F$ and $G$ have the same boundary conditions.

Since the hermitian operator $\Lambda$ commutes with the antihermitian operator $\partial_{\theta}$, we can expand our functions in a mutual eigenbasis $\zeta_{kn}(r) e^{in\theta}$, where $\Lambda (\zeta_{kn}e^{in\theta}) = -\lambda_{kn} \zeta_{kn}(r)e^{in\theta}$, for some functions $\zeta_{kn}$ defined for $k\in\mathbb{N}$ and $n\in\mathbb{N}$. By the usual Sturm-Liouville theory, we see that $\lambda_{kn}\to\infty$ as $k\to\infty$ for any fixed $n$.

Then we can write 
$$ F(t,r,\theta) = \sum_{k=1}^{\infty} \sum_{n\in\mathbb{Z}} F_{kn}(t) \zeta_{kn}(r) e^{in\theta},$$
and similarly for $G$ and $\sigma$. Hence the coefficients satisfy the ordinary differential system 
\begin{equation}\label{timeonlylineuler}
\begin{split}
\frac{d\sigma_{kn}}{dt} + F_{kn}(t) &= 0, \\
\frac{dF_{kn}}{dt} + 2\omega G_{kn}(t) - c^2 \lambda_{kn} \sigma_{kn}(t) &= 0, \\
\frac{dG_{kn}}{dt} - 2\omega F_{kn}(t) + in \omega^2 \sigma_{kn}(t) &= 0.
\end{split}
\end{equation}

This is a constant-coefficient system, and its characteristic polynomial is $z^3+(c^2\lambda_{kn}+4\omega^2)z + 2in\omega^3=0$.  and we want to show that it has three distinct imaginary roots in order to guarantee that $\sigma$, $F$, and $G$ are all bounded in time. Writing $z=iy$, we obtain 
\begin{equation}\label{ycharpoly}
y^3 - 3py - 2q = 0, \quad \text{where $p=(c^2\lambda_{kn} +4\omega^2)/3$ and $q=n\omega^3$.} 
\end{equation}
Such a cubic has three distinct real roots if and only if $p>0$ and $q^2< p^3$.
So we want to show that $c^2\lambda_{kn} > \omega^2 (3n^{2/3}-4)$ for every $n\in \mathbb{Z}$ and every $k\in \mathbb{N}$. Obviously it is enough to show that $c^2\lambda_{1n}> \omega^2 (3n^{2/3}-4)$.

To obtain this bound on the smallest eigenvalue, 
we use the Rayleigh minimum principle. Let $\mathcal{F}$ be the space of smooth functions $f\colon [0,1]\to \mathbb{R}$ such that $f(1)=0$; then we can compute that
\begin{multline*} 
\lambda_{1n} \ge a \,\inf_{f\in \mathcal{F}} \frac{\int_0^1 r f'(r)^2 \, dr + n^2 \int_0^1 f(r)^2/r \, dr}{\int_0^1 rf(r)^2\,dr} \\
+ b\, \inf_{f\in \mathcal{F}} \frac{\int_0^1 r^3 f'(r)^2 \, dr + n^2 \int_0^1 r f(r)^2 \, dr}{\int_0^1 r f(r)^2\,dr},
\end{multline*}
where $a=\rho_0 - \frac{\omega^2}{2c^2}$ and $b=\frac{\omega^2}{2c^2}$. 
The first term is exactly the Rayleigh quotient for the Bessel operator, and hence it is minimized when $f(r) = J_n(c_n r)$ with $c_n$ the first positive root of the Bessel function $J_n$; the minimum value is then $c_n^2$. We claim the second quotient takes the minimum value $n^2+1$: to see this, write $h(r)=rf(r)$, so that $h(0)=h(1)=0$, and 
$$ 
\frac{\int_0^1 r^3 f'(r)^2 \, dr}{\int_0^1 r f(r)^2 \, dr} 
= 1 + \frac{\int_0^1 rh'(r)^2 \, dr}{\int_0^1 \frac{1}{r} h(r)^2 \, dr},
$$ 
and the infimum of this last term is zero, as can be seen by computing it for functions $h(r) = \frac{-\ln{r}}{1+\alpha(\ln{r})^2}$ for $\alpha>0$; we obtain $\frac{\alpha}{2}$, so we can make it as close as desired to zero. 
As a result we have $\lambda_{1n} \ge a c_n^2 + b(n^2+1) > \frac{\omega^2}{2c^2} (n^2+1)$. The fact that $c^2\lambda_{1n} > \omega^2 (3n^{2/3}-4)$ now follows from the fact that $n^2+7>6n^{2/3}$ for every integer $n$.

Hence all solutions of the system \eqref{timeonlylineuler} oscillate in time; specifically we can write $\sigma_{kn}(t) = \sigma_{1kn} e^{iy_{1kn}t} + \sigma_{2kn} e^{iy_{2kn}t} + \sigma_{3kn} e^{iy_{3kn}t}$ for some distinct reals $y_{ikn}$, and similarly for $F_{kn}$ and $G_{kn}$. Having obtained such a solution for $F$ and $G$, we can then find $f=\Laplacian^{-1}F$ and $g=\Laplacian^{-1}G$. Here $g$ is assumed to vanish on the boundary, and we can obtain the boundary values for $f$ using 
\eqref{stilllinearizedcontinuity}--\eqref{specificlinearizedevolution}.
\end{proof}

It is now trivial to figure out how Jacobi fields grow, using \eqref{linearizedtransportflow}.

\begin{corollary}
Let $\gamma=(\eta, F)$ be a geodesic in the space $C^{\infty}(D^2, D^2 \times \mathbb{R})$ for which the corresponding tangent vector satisfies the condition of Corollary \ref{compressiblegeodesic}, with velocity $u$ and density $\rho$ given as in Theorem \ref{eulerianrigidrotation}. Then Jacobi fields are bounded for all time if the initial perturbation $v_0$ satisfies $\int_0^{2\pi} \curl{\big(\rho(r) v_0(r,\theta)\big)} \, d\theta = 0$ for every $r$.
\end{corollary}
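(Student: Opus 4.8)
The plan is to trace through the structure of a Jacobi field as organized by Theorem~\ref{linearizations} and Theorem~\ref{eulerianrigidrotation}, and isolate exactly which piece of the perturbation can produce unbounded growth. From Theorem~\ref{linearizations}, a Jacobi field along $\gamma$ is determined by the linearized velocity $v$ and density perturbation $\sigma$ solving \eqref{linearizedcontinuity}--\eqref{linearizedevolution}, together with $j$ solving the second equation in \eqref{linearizedtransportflow}, namely $\partial_t j + [u,j] = v$. Theorem~\ref{eulerianrigidrotation} tells us that $\sigma$, and hence (through \eqref{vfg}) the gradient part $\grad f$ and symplectic-gradient part $\sgrad g$ of $\rho v$, are bounded in time, being finite superpositions of purely oscillatory modes $e^{iyt}\zeta_{kn}(r)e^{in(\theta-\omega t)}$. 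So the only possible source of unbounded growth in $j$ is a piece of $v$ that is \emph{not} captured by the system \eqref{timeonlylineuler} governing $F=\Laplacian f$, $G=\Laplacian g$, $\sigma$ --- that is, a harmonic remainder.

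First I would make precise the decomposition $\rho v = \grad f + \sgrad g$ used in the proof of Theorem~\ref{eulerianrigidrotation} and note that it determines $f$ and $g$ only up to their harmonic parts, i.e.\ up to constants on the disc (for $g$ vanishing on the boundary this is pinned down, but for $f$ the boundary value is free and, more importantly, the evolution equations only control $\Laplacian f$). The uncontrolled degree of freedom is therefore the harmonic component of $v$ itself. Because $u = \omega\,\partial_\theta$ is a rigid rotation, a short computation shows $[u,z]$ for a harmonic (gradient-of-harmonic) field $z$ just advects it with the rotation, so the equation $\partial_t j + [u,j] = v$, restricted to the harmonic sector, becomes essentially $\partial_t(\text{harmonic part of }j) = (\text{harmonic part of }v\text{ advected by the flow})$, which integrates to linear-in-$t$ growth unless the harmonic part of $v$ vanishes. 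The cleanest invariant way to detect the harmonic part of $\rho v$ is via $\curl(\rho v)$: on an annulus/disc the relevant obstruction to $\rho v$ being a pure gradient plus boundary-vanishing $\sgrad g$ is the circulation $\int_0^{2\pi}\curl(\rho v(r,\theta))\,d\theta$ on each circle of radius $r$, which is exactly the quantity in the statement evaluated at $t=0$.

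So the key steps, in order, are: (1) write $J(t) = (j\circ\eta, G)$ and recall from \eqref{linearizedtransportflow} that boundedness of $j$ reduces, modulo the bounded contributions coming from bounded $v$, to boundedness of the transport equation $\partial_t j + [u,j] = v$; (2) decompose $v$ (or $\rho v$) into the "dynamical" part controlled by Theorem~\ref{eulerianrigidrotation} --- all of which is bounded in time --- plus a harmonic remainder; (3) show the harmonic remainder of $v$ is time-independent up to rigid-rotation advection, using $\nabla_u u = 0$ and the structure of $\nabla_v u$ for this $u$, so that it contributes a term growing linearly in $t$ to $j$ unless it vanishes; (4) identify the vanishing of this harmonic remainder with the circulation condition $\int_0^{2\pi}\curl(\rho v_0(r,\theta))\,d\theta = 0$ for every $r$, invoking that this circulation is conserved by the linearized dynamics (which follows from Kelvin-type circulation for the linearized equations, or directly from \eqref{stilllinearizedcontinuity}--\eqref{specificlinearizedevolution}); and (5) conclude that under this hypothesis all of $v$ is bounded and hence $j$ is bounded for all time, as claimed.

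The main obstacle I expect is step (3)--(4): one must check carefully that the only unbounded mode is the harmonic circulation mode and that it really is conserved by the linearized system. The subtlety is that $\nabla_v u$ mixes $f$ and $g$ (the proof of Theorem~\ref{eulerianrigidrotation} gives $\nabla_v u = \tfrac{\omega}{r}\grad g - \tfrac{\omega}{r}\sgrad f$), so the harmonic sector is not obviously decoupled from the rest; one needs to verify that projecting the linearized momentum equation onto the (radius-by-radius) circulation functional kills all the eigenmode contributions and leaves a closed equation for the circulation alone, showing it is constant in $t$. Once that conservation law is in hand, the corollary follows immediately from Theorem~\ref{eulerianrigidrotation} and the integration of the transport equation for $j$.
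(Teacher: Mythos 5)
There is a genuine gap: the mechanism you identify for growth of $j$ is not the one at work, and your concluding step fails. In step (5) you argue that once the circulation condition holds, ``all of $v$ is bounded and hence $j$ is bounded,'' but Theorem \ref{eulerianrigidrotation} already guarantees that \emph{every} solution $v$ of the linearized equations is bounded, with no hypothesis on $v_0$; if boundedness of $v$ implied boundedness of $j$, the circulation condition would be vacuous. The point is that the transport equation $j_t+\omega j_\theta=v$ integrates $v$ in the rotating frame: a mode $v=e^{iyt}e^{in(\theta-\omega t)}q(r)$ produces $j=e^{in(\theta-\omega t)}q(r)\int_0^t e^{iys}\,ds$, which grows linearly precisely when $y=0$, even though $v$ remains bounded. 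Moreover the dangerous mode is not a ``harmonic remainder'' lying outside the system \eqref{timeonlylineuler}: on the simply connected disc the decomposition $\rho v=\grad f+\sgrad g$ with $g$ vanishing on the boundary leaves no harmonic piece at all, and the mode responsible for growth is an honest eigenmode of \eqref{timeonlylineuler}. Indeed, from the characteristic polynomial \eqref{ycharpoly}, $y=0$ is a root exactly when $n=0$ (the roots are then $0,\pm\sqrt{3p}$), and the corresponding eigenvector has $F=0$ but $G=\curl(\rho v)\neq 0$, so it is in particular not curl-free. The paper's argument is exactly this spectral one: substitute the modal form of $v$ from Theorem \ref{eulerianrigidrotation} into $j_t+\omega j_\theta=v$, note that boundedness fails only for $y=0$, observe $y=0$ forces $n=0$, and check that the $n=0$ modes are excited if and only if $\int_0^{2\pi}\curl(\rho v_0)\,d\theta\neq 0$ for some $r$.

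Your step (4) also needs repair: the circulation $\int_0^{2\pi}\curl(\rho v)\,d\theta$ is \emph{not} conserved by the linearized dynamics. Averaging \eqref{homo3} over $\theta$ gives $\partial_t\bar G=2\omega\bar F$, which is nonzero in general; combining with the average of \eqref{homo1} shows that the conserved quantity in the $n=0$ sector is $\bar G+2\omega\bar\sigma$, which coincides with the initial circulation only because $\sigma(0)=0$ (from $J(0)=0$ and $\sigma=-\diver(\rho j)$). So you do land on the correct criterion, but the justification via a harmonic component and a Kelvin-type conservation law does not constitute a proof; what is needed is the identification of the zero root of \eqref{ycharpoly} at $n=0$ and the verification (using $\sigma(0)=0$ and the conserved functional $\bar G+2\omega\bar\sigma$, i.e.\ the projection onto the null eigenvector of \eqref{timeonlylineuler}) that the stated condition on $v_0$ is exactly what removes the excitation of that zero-frequency mode.
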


\begin{proof}
From \eqref{linearizedtransportflow} we see that $j_t + \omega j_{\theta} = v$, and we expressed $v$ as a sum of components of the form $v(t,r,\theta) = e^{iyt} e^{in(\theta-\omega t)} q(r)$ for some real number $y$. Hence 
the corresponding component of the Jacobi field is 
$$ j(t,r,\theta) = e^{in(\theta-\omega t)} q(r) \int_0^t e^{iys} \, ds.$$
This will be bounded for all time if and only if $y\ne 0$. From equation \eqref{ycharpoly}, we see that
$y=0$ iff $n=0$. It is easy to check that we have nonzero $n=0$ components if and only if the condition of the theorem is satisfied.
\end{proof}

This is another illustration of the fact that compressible flows tend to be more Lagrangian stable than incompressible flows. There are many Jacobi fields which are bounded in time in the compressible case, while in the incompressible case the curvature along a rigid rotational flow vanishes identically (it satisfies both the nonnegativity condition of Misio{\l}ek~\cite{misiolek} and the nonpositivity condition of the author~\cite{prestonnonpositive}) and thus all Jacobi fields grow linearly in time.

\makeatletter \renewcommand{\@biblabel}[1]{\hfill#1.}\makeatother

\end{document}